\documentclass[12pt]{amsart}
\usepackage{amsmath,amsthm,amsfonts,amssymb,bm,graphicx, mathrsfs, fullpage}
\usepackage
{hyperref}

\usepackage[english]{babel}
\usepackage[margin=1in]{geometry}
\usepackage{todonotes}

\hypersetup{colorlinks=true,citecolor=blue,linkcolor=blue,urlcolor=blue,
pdfstartview=FitH }
\usepackage[all,cmtip]{xy}

\usepackage{microtype}
\usepackage{enumitem}

\usepackage[
backend=biber,
style=alphabetic,
]{biblatex}
\addbibresource{Effective_Equidistribution.bib}

\DeclareMathOperator{\per}{Per}

\theoremstyle{plain}
\newtheorem{theorem}{Theorem}

\newtheorem{lemma}[theorem]{Lemma}
\newtheorem{prop}[theorem]{Proposition} 
\newtheorem{cor}[theorem]{Corollary}

\theoremstyle{definition}

\newtheorem*{acknowledgments}{Acknowledgments}

\theoremstyle{remark}
\newtheorem{rem}{Remark}

\numberwithin{theorem}{section}
\numberwithin{equation}{section}
\numberwithin{definition}{section}

\begin{document}

\author{Asaf Katz}
\address{School of Mathematics, Georgia Institute of Technology, Atlanta GA 30332, USA}
\email{akatz47@gatech.edu}
\author{Thomas Aloysius O'Hare}
\address{Department of Mathematics, Northwestern University, Evanston IL 60203, USA}
\email{thomas.aloysius.ohare@northwestern.edu}

\title{Effective equidistribution for Contact Anosov flows in Dimension Three}

\begin{abstract}
    We prove effective equidistribution theorems for (weighted) packets of closed periodic orbits for Anosov flows. In particular, for the case of contact Anosov flows on three-dimensional manifolds, we show that the Bowen packets equidistribute at an exponential rate.
\end{abstract}

\maketitle
\section{Introduction}
Recall that an Anosov flow $f_{t}$ over a closed Riemannian manifold $M$ is defined as a flow $f_{t}:M\to M$ such that there exists $\lambda>1$ and a splitting of the tangent bundle $TM$ into a \emph{stable}, \emph{unstable} and central (flow) direction - $TM=E^{s}\oplus E^{0}\oplus E^{u}$ such that
$$ \left\lVert Df_1\mid_{ E^{s}}\right\rVert \leq \lambda^{-1}, \left\lVert Df_{1}\mid_{E^{u}}\right\rVert \geq \lambda, \dim E^{0}=1.$$
 
Let $\Psi:M\to\mathbb{R}$ be a $C^{1}$ function (known as the weight).
The topological pressure $P(\Psi)$ of the weight $\Psi$ is defined to be 
$$ P(\Psi) = \sup_{\nu} \left\{ \int \Psi d\nu + h_{\nu}(f_{1})\right\},$$
where the supremum is taken over the set of all $f_{t}$-invariant probability measures $\nu$ and $h_{\nu}(f_{1})$ is the measure theoretical entropy of the time-one map $f_{1}$ with respect to $\nu$.
We assume that $P(\Psi)>0$.
A measure $\mu$ is called an equilibrium state (associated to a weight $\Psi$) if 
$$ P(\Psi) = \int \Psi d\mu + h_{\mu}(f_{1}),$$
namely the measure is an extermizer of the pressure function.

It is well known that in the case of Anosov flows, such equilibrium states are unique (c.f.~\cite[Theorem~7.3.6]{Fisher_Hasselblatt_2019}).

Let $\per((0,T])$ denote the set of closed orbits under $f_t$ of period at most $T$ and for a periodic orbit $\tau$ denote its period by $\ell(\tau)$. Consider the family of measures supported on closed periodic orbits for the $f_{t}$-flow 
\begin{equation}
\label{eqn: Discrete Measure def}
    \mu_{\Psi,T}:=\frac{1}{Z_T(f_t,\Psi)}\sum_{\tau\in\per((0,T])}e^{\int_\tau \Psi }\delta_\tau,
\end{equation}
where $Z_T(f_t,\Psi)$ is a normalization constant to make $\mu_{\Psi,T}$ a probability measure. It is a theorem of R. Bowen~\cite{Bowen_1972} that these discrete measures converge to the measure of maximal entropy in the weak$^*$-topology: $\mu_{\Psi,T}\rightarrow_{w^*}\mu_\Psi$.
Our goal is to prove a quantitative version of this equidistribution result.

A \emph{contact} Anosov flow is an Anosov flow which preserves a contact form, i.e. a $1-$form $\alpha$ such that $\alpha\wedge(d\alpha)^n\neq0$, where $\dim(M)=2n+1$.
A prime example of a contact Anosov flow is the geodesic flow associated to a closed surface $N$ of strictly negative curvature $g_{t}:T^{1}N\to T^{1}N$. In fact, 
In this case, the measure of maximal entropy is known as the BMS - Bowen-Margulis-Sullivan measure\footnote{This measure coincides with the Liouville measure in the case of constant negative curvature.}. In this important case, we can provide \emph{exponential rate of convergence}:

\begin{theorem}[Effective Equidistribution for contact flows]
\label{thm: Effective Equidistribution}
    Let $g_t:M\rightarrow M$ be a contact Anosov flow on a three-dimensional manifold $M$, and let $\Psi\in C^1(M)$ be a potential with equilibrium state $\mu_\Psi$ and $P(\Psi)>0$, and let $\mu_{\Psi,T}$ be as in (\ref{eqn: Discrete Measure def}). Then there exist constants $C,\delta>0$ such that for any $C^1$ function $K:M\rightarrow\mathbb{R}$ we have
    \begin{equation}
    \label{eqn: Effective Equidistribution}
        \left|\int K d\mu_\Psi-\int K d\mu_{\Psi,T} \right|\leq
        C||K||_{C^1}e^{-\delta\cdot P(\Psi)\cdot T}.
    \end{equation}
\end{theorem}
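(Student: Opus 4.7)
The plan is to reduce the weighted periodic orbit sum to a Ruelle transfer operator via a Markov coding of the flow, and to extract the exponential rate from a Dolgopyat-type spectral estimate on twisted transfer operators, which is available under the contact hypothesis in dimension three.

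First I construct a Markov family of cross-sections (Bowen--Ratner) realising $g_t$ as a suspension flow over a topologically mixing subshift of finite type $(\Sigma,\sigma)$ with H\"older roof function $r:\Sigma\to(0,\infty)$. A cohomological reduction passes to the one-sided shift, and the $C^1$ data $\Psi, K$ pull back to H\"older symbolic functions $\widetilde\Psi,\widetilde K$; each $\tau\in\per((0,T])$ corresponds (with multiplicity) to a periodic point $x$ of shift-period $n$ with $\ell(\tau)=r_n(x)$, $\int_\tau\Psi=\widetilde\Psi_n(x)$, $\int_\tau K=\widetilde K_n(x)$. The target quantity then unfolds as
\[
Z_T(K):=\sum_{\ell(\tau)\le T}\Bigl(\int_\tau K\Bigr)e^{\int_\tau\Psi}=\partial_\epsilon\Big|_{\epsilon=0}\sum_{\ell(\tau)\le T}e^{\int_\tau(\Psi+\epsilon K)},
\]
and for $s\in\C$ and small $\epsilon\in\bb R$ this is encoded by the twisted transfer operator
\[
(\c L_{s,\epsilon}h)(x)=\sum_{\sigma y=x}e^{\widetilde\Psi(y)-s\,r(y)+\epsilon\widetilde K(y)}\,h(y).
\]

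Next, by Ruelle--Perron--Frobenius the operator $\c L_{s,\epsilon}$ acting on a H\"older space has a simple leading eigenvalue $\lambda(s,\epsilon)$, smooth near $(P(\Psi),0)$, with $\lambda(P(\Psi),0)=1$ and $\partial_\epsilon\lambda(P(\Psi),0)=\int K\,d\mu_\Psi$; the associated dynamical zeta function $\zeta_{s,\epsilon}$ has its only simple singularity near $\Re s=P(\Psi)$ at the solution of $\lambda(s,\epsilon)=1$. The essential analytic input is the \emph{Dolgopyat estimate}: for three-dimensional contact Anosov flows the stable and unstable foliations satisfy uniform non-integrability (UNI), which yields a bound $\|\c L_{s,\epsilon}^n\|\le C(1+|\Im s|)^{-\eta n}$ uniformly for $|\Re s-P(\Psi)|<2\delta$, $|\epsilon|<\epsilon_0$, and all large $|\Im s|$. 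This produces a zero-free strip $\{\Re s>P(\Psi)-2\delta\}$ for $\zeta_{s,\epsilon}$ with polynomial growth on vertical lines. To finish, I write $Z_T(K)$ as a Perron-type contour integral of $e^{sT}/s$ against $\partial_\epsilon\log\zeta_{s,\epsilon}\big|_{\epsilon=0}$ and shift the contour from $\Re s=P(\Psi)+1/T$ to $\Re s=P(\Psi)-\delta$: the residue at $s=P(\Psi)$ produces the main term $\sim e^{P(\Psi)T}\int K\,d\mu_\Psi$, while the shifted integral is bounded by $C\|K\|_{C^1}e^{(P(\Psi)-\delta)T}$ via the polynomial vertical estimate. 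The identical argument with $K\equiv 1$ handles the normalisation $Z_T(g_t,\Psi)$, and the ratio yields \eqref{eqn: Effective Equidistribution}.

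The hard part will be the Dolgopyat estimate together with a clean tracking of the $\|K\|_{C^1}$ dependence: Markov sections are only piecewise smooth, so the symbolic model is H\"older rather than $C^1$, and the UNI constants must be propagated into a uniform bound on the $\epsilon$-twisted operator whose $\epsilon$-derivative at $0$ controls the $K$-weighted sum. A subsidiary difficulty is the uniform analytic control of $1-\lambda(s,\epsilon)$ in a complex neighbourhood of $(P(\Psi),0)$, which is needed both to justify the contour shift and to correctly identify the residue with $\int K\,d\mu_\Psi$.
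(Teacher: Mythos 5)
Your strategy---Markov coding, a twisted transfer operator whose $\epsilon$-derivative at $0$ encodes the $K$-weighted orbit sum (this is exactly the paper's two-variable zeta function $\zeta_{\Psi,K}(s,z)$ and its logarithmic derivative $\eta_{\Psi,K}$), Dolgopyat's estimate for three-dimensional contact flows, and a Perron contour shift---is the same route the paper takes. However, three steps you wave at would fail as written. First, you leave the symbolic model H\"older and defer the mismatch with the $C^1$ data to "the hard part" without resolving it. The resolution is not cosmetic: for a contact Anosov flow in dimension three the bundles $E^s,E^u$ are $C^{1+\varepsilon}$, so after collapsing along stable leaves one codes by a genuinely $C^1$ expanding map on unstable intervals with a $C^1$ roof function; $C^1$ test functions then lift with $\left\lVert k\right\rVert_{1,t}\leq C\left\lVert K\right\rVert_{C^1}$, and Dolgopyat's bound is applied in the adapted norm $\left\lVert\cdot\right\rVert_{1,t}$ on $C^1(I)$. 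A bare H\"older coding does not deliver the uniform $\left\lVert K\right\rVert_{C^1}$-tracking your final estimate requires.

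Second, the unsmoothed Perron integral of $e^{sT}/s$ against $\eta_{\Psi,K}$ does not converge absolutely on the shifted line: Dolgopyat plus Phragm\'en--Lindel\"of gives only $|\eta_{\Psi,K}(\sigma+it)|=O\left(\left\lVert K\right\rVert_{C^1}|t|^{\alpha}\right)$ with some $0<\alpha<1$, and $\int^{\infty} t^{\alpha-1}\,dt$ diverges. One needs the smoothed kernel $T^{s+1}/(s(s+1))$ (integrable precisely because $\alpha<1$, which is why the interpolation step matters) followed by a differencing argument with window $\Delta(T)=T^{1-\delta}$ to recover the unsmoothed counting function, at the cost of a harmless loss in the exponent. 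Third, you say each closed orbit corresponds to a symbolic periodic point "with multiplicity" but never control that multiplicity: orbits meeting the boundary of the Markov partition are overcounted, and one must show their weighted contribution is $O\left(e^{(c-\delta_0)T}\right)$ via the Bowen--Manning auxiliary subshifts, whose pressure for $\Psi$ is strictly below $P(\Psi)$. Without this, the passage from the symbolic asymptotics to the asymptotics for $\mu_{\Psi,T}$ is not justified at the exponential scale claimed.
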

This result covers the fundamental case of geodesic flow over a compact surface of negative curvature but also proves equidistribution for other types of three dimensional contact flows~\cite{Foulon_Hasselbaltt_2013}.
\begin{rem}
    The hypothesis that $\Psi$ is $C^1$ can be weakened to assuming only H\"older regularity, but the method of proof we present relies on smoothness of the potential. To circumvent this, one could approximate a H\"older $\Psi$ by $C^1$ functions, following closely the approach of Dolgopyat in \cite{Dolgopyat_1998}. However, as the main potentials we are interested in $\Psi\equiv 0$ and the geometric potential $\Psi_u=-\frac{\partial}{\partial t}\log(Df_t|_{E^u})|_{t=0}$, corresponding respectively the measure of maximal entropy $\mu_{MME}$ and the SRB measure $\mu_{SRB}$, are both $C^1$, we will not concern ourselves with the details for general H\"older potentials. Notice that while our theorem does not apply directly to the geometric potential, since $P(\Psi_u)=0$, we can change our potential by adding to it any positive constant $\varepsilon>0$ so that $P(\Psi_u+\varepsilon)=\varepsilon>0$, and note that that equilibrium state of $\Psi_u+\varepsilon$ is still the SRB measure (though the discrete measures (\ref{eqn: Discrete Measure def}) do change when we add a constant to our potential).
\end{rem}

\begin{rem}
    It is possible that the result and its proof are valid also in the case of non-compact surfaces. Nevertheless, as the non-compact setting introduces technical complications, we decided to state and prove the theorem only for the closed case.
\end{rem}

In the case of general Anosov flow over a closed manifold $f_{t}:M\to M$, our results are weaker and we can only achieve \emph{polynomial rate}:
\begin{theorem}[Effective equidistribution for Anosov flows]
\label{thm: Effective Equidistribution - general Anosov}
    Let $f_{t}:M\to M$ be a transitive weak-mixing Anosov flow, and let $\Psi\in C^1(M)$ be a potential with equilibrium state $\mu_\Psi$ and $P(\Psi)>0$, and let $\mu_{\Psi,T}$ be as in (\ref{eqn: Discrete Measure def}). Then there exist constants $C,\delta>0$ such that for any Lipschitz function $K:M\rightarrow\mathbb{R}$ we have
    \begin{equation}
    \label{eqn: Effective Equidistribution poly}
        \left|\int K d\mu_\Psi-\int K d\mu_{\Psi,T} \right|\leq
        C||K||_{Lip}T^{-\delta}
    \end{equation}
\end{theorem}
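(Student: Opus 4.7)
The plan is to follow the same general blueprint as for Theorem \ref{thm: Effective Equidistribution}, replacing the Dolgopyat $L^2$ oscillatory cancellation by the softer spectral input that is available for every transitive weak-mixing Anosov flow; this softer input is exactly what degrades the rate from exponential to polynomial. First I would build a Markov section for $f_t$, recoding the flow as a suspension of a topologically mixing subshift of finite type $(\Sigma,\sigma)$ over a H\"older roof $r$, and pull $\Psi$ and $K$ back to H\"older observables on the suspension. The weighted orbit sum $\int K\,d\mu_{\Psi,T}$ can then be written as a sum over periodic points of $\sigma$ weighted by Birkhoff sums of $\tilde\Psi-P(\Psi)r$, and packaged into a weighted zeta function $\zeta_{\Psi,K}(s)$ whose behavior near $s=P(\Psi)$ encodes both the asymptotic count and $\int K\,d\mu_\Psi$.

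Next I would analyze $\zeta_{\Psi,K}(s)$ through the family of twisted transfer operators $\mathcal{L}_{\tilde\Psi-sr}$ acting on an appropriate Banach space of H\"older functions on $\Sigma$, whose Fredholm determinants essentially recover $\zeta_{\Psi,K}(s)$. Quasi-compactness together with Ruelle--Perron--Frobenius supplies a simple dominant eigenvalue when $s$ is real and close to $P(\Psi)$. The weak-mixing hypothesis on $f_t$ is equivalent to $r$ not being cohomologous to a function valued in a discrete subgroup of $\mathbb{R}$, which gives the qualitative spectral statement that $\mathcal{L}_{\tilde\Psi-(P(\Psi)+it)r}$ has spectral radius strictly less than $1$ for every real $t\neq 0$. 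A compactness/perturbation argument then produces, in a neighborhood of $s=P(\Psi)$ on the critical line, a meromorphic extension of $\zeta_{\Psi,K}$ whose only singularity is a simple pole at $P(\Psi)$, with residue carrying $\int K\,d\mu_\Psi$.

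To convert this into an effective estimate I would apply a Perron-type contour integral representation for the weighted counting function $N_{\Psi,K}(T)=\sum_{\ell(\tau)\le T}e^{\int_\tau\Psi}\int_\tau K$ and deform the contour slightly to the left of $\mathrm{Re}(s)=P(\Psi)$. The main term is the residue at $P(\Psi)$, which after the normalization in (\ref{eqn: Discrete Measure def}) reproduces $\int K\,d\mu_\Psi$, while the remainder is controlled by a resolvent estimate on $\mathcal{L}_{\tilde\Psi-sr}$ along the shifted vertical line. Without Dolgopyat's cancellation the best available zero-free strip shrinks with $|\mathrm{Im}(s)|$ and the resolvent norm grows polynomially in $|\mathrm{Im}(s)|$; balancing the height of the contour against the width of the strip, and allowing the Lipschitz norm of $K$ to enter via a standard mollification, produces the target bound $C\|K\|_{Lip}\,T^{-\delta}$ for some explicit $\delta>0$.

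The hardest step will be producing an effective resolvent bound on the twisted transfer operator at large $|\mathrm{Im}(s)|$ without appealing to the contact structure that drives Theorem \ref{thm: Effective Equidistribution}. I expect to combine the qualitative spectral statement coming from weak-mixing with effective decay-of-correlations estimates for the suspension flow of the type furnished by anisotropic Banach space methods (in the spirit of Liverani, Butterley--Liverani, and Tsujii), and to handle the Lipschitz test function by a mollification that trades a small power loss in $T$ against smoothness, so that the contour-integral error stays of the polynomial shape $T^{-\delta}$ claimed in (\ref{eqn: Effective Equidistribution poly}).
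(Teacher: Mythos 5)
Your overall architecture --- Markov coding, H\"older lifts, a weighted zeta function built from twisted transfer operators, a Perron contour shift, and a height-versus-strip-width balancing --- is exactly the skeleton of the paper's argument, and your bookkeeping of where the main term and the error come from is right. The genuine gap sits at the step you yourself flag as hardest: the quantitative spectral/resolvent bound for $\mathcal{L}_{\tilde\Psi-sr}$ at large $\lvert\mathrm{Im}(s)\rvert$. You propose to extract it from effective decay of correlations via anisotropic Banach space methods (Liverani, Butterley--Liverani, Tsujii). That route does not work in the stated generality: those methods deliver exponential mixing for \emph{contact} Anosov flows (or flows with sufficiently smooth foliations), i.e.\ precisely the setting of Theorem~\ref{thm: Effective Equidistribution}; for a general transitive weak-mixing Anosov flow no effective decay of correlations is currently known, so there is nothing there to feed into your contour estimate. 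The qualitative statement you invoke (spectral radius strictly less than $1$ for $t\neq 0$, from $r$ not being cohomologous to a lattice-valued function) gives analyticity on the critical line but no uniformity in $t$, hence no rate at all.

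The input the paper actually uses is the Pollicott--Sharp estimate (Proposition~\ref{prop: Weaker Dolgopyat estimate}, imported from \cite{Pollicott_Sharp_2001}): a purely symbolic operator bound of the form $\bigl\lVert\mathcal{L}_{-shr}^{2m\lfloor C\log\lvert t\rvert\rfloor}\bigr\rVert\leq C_1\lvert t\rvert\, e^{2m\lfloor C\log\lvert t\rvert\rfloor P(-\sigma r)}\bigl(1-\lvert t\rvert^{-\tau}\bigr)^{m-1}$ on H\"older functions, obtained by a quantitative refinement of the non-lattice condition rather than by any decay-of-correlations machinery. This produces a zero-free region whose width shrinks like $\lvert t\rvert^{-\rho}$ together with polynomial growth of $\eta_K$ there (Theorem~\ref{thm: Weaker Analytic Extension of Eta}); one must then truncate the Perron integral at height $R(T)=(\log T)^{\varepsilon}$, which only saves a power of $\log T$ in the counting function, and the polynomial rate of the theorem emerges only after the substitution $T\mapsto e^{hT}$. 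Without identifying this (or an equivalent) quantitative spectral input, the decisive step of your argument is unsupported. Two smaller points: the mollification of $K$ is unnecessary, since Lipschitz functions lift directly to H\"older observables on the coding space and the transfer operators already act on H\"older spaces; and you should also account for the Bowen--Manning overcounting of boundary orbits when passing from the symbolic count back to the flow, as in the proof of Theorem~\ref{thm: Effective Equidistribution}.
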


Broadly speaking, the proof of Theorem \ref{thm: Effective Equidistribution} takes place in three main steps. The first is appropriately code our system in a particular way that allows us to utilize the smoothness of our dynamics. In effect, for the case of contact Anosov flows - we will get a ``$C^1$-coding." This coding is detailed in Dolgopyat's thesis, and the paper of Pollicott and Sharp, in the case of geodesic flow on the unit tangent bundle of a negatively curved surface. 

The second step is to prove effective equidistribution of the coded system. Following the approach of Pollicott and Sharp in~\cite{Pollicott-Sharp_1998}, we prove effective equidistribution by relating it to the analytic extensions of certain dynamical zeta functions using techniques from analytic number theory. The main difference is that we use the two-variable zeta function discussed by Parry and Pollicott~\cite{Parry_Pollicott_1990} which incorporates the test function $K$.

The final step is to pass from effective equidistribution on the symbolic level to effective equidistribution on the dynamical system itself. This involves handling deficiencies in the counting introduced by our coding.

\subsection{Related results}
Bowen's proof of his theorem, relied on a uniqueness principle for measures of maximal entropy, developing earlier approach of Adler and Weiss. This approach is essentially quantifiable (by controlling the relative entropy, the KL-divergence and then bounding the difference of the integrals via Pinsker inequality), but yields sub-optimal approaches. This was carried, in very similar setups by I. Khayutin~\cite{Khayutin_2017} and R. Ruhr~\cite{Ruhr_2016}. Additional related work was carried out for the specific case of subshifts by S. Kadyrov~\cite{Kadyrov_2017} for the measure of maximal entropy, and the second-named author~\cite{O’Hare_2025} for general equilibrium states.

In the case of hyperbolic surfaces of \emph{constant negative curvature}, one may apply spectral theory to analyze this problem.
In pioneering works, S. Zelditch~\cite{Zelditch_1989,Zelditch_1992} managed to obtain a complete \emph{spectral resolution} for the measures $\mu_{T}$, by using a \emph{non-spherical} Trace formula. 
Using his spectral resolution, Zelditch managed to prove \emph{exponential decay rate}, with essentially the best possible rates, for \emph{specific functions}\footnote{In principle, Maass forms and lowest weight vectors for discrete series representations.}. Zelditch did not manage to extend his results for general smooth functions, as some triple product bounds were not known back then. Even with these bounds now at hand, bookkeeping the constants involved in the Mellin inversion approach of Zelditch is not easy and requires handling some special functions. Zelditch managed to only conclude \emph{super polynomial} bound for general smooth functions.
We hope to revisit Zelditch's approach in the future.

In a different work, Margulis-Mohammadi-Oh~\cite{MargulisMohammadiOh_2014} managed to obtain such equidistribution results in the context of counting results with holonomy constraints. Their approach uses Margulis' flow-box argument.

Sarnak-Wakayama~\cite{SarnakWakayama_1999} studied the counting problem with holonomy constraints earlier using a non-spherical trace formula based approach.

Our approach uses the study of (modified) Ruelle Zeta functions and transfer operators for such flows.
The initial coding results are classical, mostly attributed to Bowen and Ratner.
Related approaches to this equidistribution problem were developed by Parry-Pollicott.
An important breakthrough for Anosov flows and geodesic flows was achieved by D. Dolgopyat proving mixing in large generality. 

Lastly, Giulietti-Liverani-Pollicott~\cite{Giulietti_2013} managed to improve some of the available results in the spirit of Dolgopyat to general Anosov flows. It seems that the results that we need (zero-free strips) can be inferred from their paper for small perturbations of geodesic flows over constant curvature surfaces. We hope to consider this case in the future.

\subsection{Structure of the paper}
In \S\ref{sec:coding}, we discuss a coding procedure for the flow, mimicking Dolgopyat's own work.
In \S\ref{sec:effective-geodesic} we present an analytic framework, based on two-variable Zeta functions, to study equidistribution in the encoded system, for contact Anosov flows. This involves using Dolgopyat's inequality to get an effective Phragm\'en-Lindel\"of argument. 
Moreover, we later show that an effective equidistribution result for the encoded system translates to an effective result for the original system.
In \S\ref{sec:effective-general-Anosov}, we show how to modify the technique of the previous section to handle general Anosov flows, at the expanse of getting only a polynomial rate.

\begin{acknowledgments}
It is a pleasure to thank Dimitry Dolgopyat, Andrei Gogolev, Ralf Spatzier, Amir Mohammadi, Stephen Cantrell and Caleb Dilsavor for very helpful discussion regarding equidistribution results of periodic orbits.
\end{acknowledgments}

\section{Encoding Anosov systems}\label{sec:coding}
A standard reference for the material below is~\cite{Fisher_Hasselblatt_2019}, especially $\S 6$.

\subsection{The Bowen-Ratner coding}
The basic result we start with is the standard coding by a suspension flow over a subshift of finite type using Markov partitions:
\begin{prop}[Bowen-Ratner coding, \cite{Bowen_1973, Ratner_1973},\cite{Fisher_Hasselblatt_2019} Theorem 6.6.5]
\label{prop: Bowen Ratner Coding}
    There exists and irreducible and aperiodic transition matrix $A$ and a strictly positive H\"older continuous function $r:\Sigma_A\rightarrow\mathbb{R}$ and a surjective continuous map $\pi:\Sigma_A^r\rightarrow M$ such that
    \begin{enumerate}
        \item $\pi\circ \sigma_t^r=f_t\circ\pi$
        \item Every closed $f_t$-orbit corresponds to $\sigma_t^r$-orbit of the same (prime) period, with at most finitely many exceptions.
    \end{enumerate}
\end{prop}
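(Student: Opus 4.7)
\medskip

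\noindent\textbf{Proof plan.} The plan is to construct the coding by building a Markov family of transversal sections for the flow and taking the first-return map. Choose a small $\epsilon>0$ and select finitely many points $x_1,\dots,x_N\in M$ whose $\epsilon$-orbit segments cover $M$. At each $x_i$, take a small embedded codimension-one disk $D_i$ transverse to $E^0$, and use the local product structure coming from the hyperbolic splitting to define proto-rectangles $R_i\subset D_i$, i.e. sets closed under the bracket operation $[x,y]=W^s_{\mathrm{loc}}(x)\cap W^u_{\mathrm{loc}}(y)$ (projected along the flow to $D_i$). Let $\phi$ denote the first-return map to $R:=\bigsqcup_i R_i$ and let $r:R\to\mathbb{R}_{>0}$ denote the first-return time.

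The next step is to refine the initial rectangles so that the Markov property holds: whenever $\phi(\mathrm{int}\,R_i)\cap \mathrm{int}\,R_j\neq\emptyset$, the image $\phi(R_i)\cap R_j$ has full extent in the unstable direction across $R_j$ and full extent in the stable direction across $\phi(R_i)$. This is the heart of the classical Bowen--Ratner construction and is established via the Anosov shadowing/specification properties applied to the pseudo-orbits that trace out boundaries of candidate rectangles. Setting $A_{ij}=1$ precisely when $\phi(\mathrm{int}\,R_i)\cap \mathrm{int}\,R_j\neq\emptyset$ gives a transition matrix $A$; irreducibility and aperiodicity follow from transitivity of the Anosov flow and the possibility to refine further if necessary. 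The symbolic coding $\pi_0:\Sigma_A\to R$ is then defined by declaring $\pi_0((x_n))$ to be the unique point whose $\phi$-orbit visits $R_{x_n}$ at step $n$; hyperbolicity guarantees uniqueness and H\"older regularity of $\pi_0$ and of the return time $r$. One now forms the suspension $\Sigma_A^r$ and defines $\pi:\Sigma_A^r\to M$ by $\pi(x,s)=f_s(\pi_0(x))$ for $0\leq s<r(x)$; the intertwining relation $\pi\circ\sigma_t^r=f_t\circ\pi$ is immediate from the definition.

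For the correspondence of periodic orbits, a closed $\sigma_t^r$-orbit of prime period $T$ corresponds under $\pi$ to a closed $f_t$-orbit of the same period because $\pi_0$ is injective on sequences whose $\pi_0$-image avoids $\partial R:=\bigcup_i \partial R_i$. Conversely, any closed $f_t$-orbit disjoint from $\pi_0(\partial R)$ lifts uniquely to a closed $\sigma_t^r$-orbit. The exceptional orbits are those whose trace meets $\pi_0(\partial R)$; since $\partial R$ is a finite union of codimension-one pieces, one argues that only finitely many periodic orbits are entirely trapped in this invariant boundary set (indeed it is itself a finite union of lower-dimensional hyperbolic invariant sets on which there are only finitely many periodic orbits of bounded complexity type), yielding the ``finitely many exceptions'' in part (2).

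The main obstacle is the Markov refinement step: ensuring that after refinement the rectangles honestly satisfy the product Markov property while keeping them finite in number and with piecewise smooth (hence H\"older) return time. All the subtlety is geometric and well documented in~\cite{Bowen_1973,Ratner_1973,Fisher_Hasselblatt_2019}, so the plan is essentially to invoke that construction and verify that its output meets the precise formulation stated above. The regularity improvements needed for the $C^1$-coding advertised in the introduction will be carried out as a separate upgrade in the next subsection.
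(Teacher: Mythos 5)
The paper does not actually prove this proposition; it is quoted as a black-box classical result (Bowen, Ratner, Fisher--Hasselblatt Theorem 6.6.5), with only a one-sentence gloss ("build transversals to the flow satisfying a Markov property") before moving on to Dolgopyat's refinement. Your sketch of the construction of $A$, $r$, and $\pi$ --- proto-rectangles on transversal disks via the local product structure, the Markov refinement by shadowing, the transition matrix from intersections of interiors, and the suspension with the evident intertwining relation --- is the standard route and is consistent with the cited sources, so for part (1) there is nothing to object to beyond the fact that you, like the paper, ultimately defer the Markov refinement to the references.

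However, your justification of part (2) contains a genuine error. You claim that the exceptional orbits are those trapped in the invariant boundary set $\partial R$ and that "only finitely many periodic orbits are entirely trapped" there. This is false for a general Anosov flow: the stable boundary is forward-invariant and the unstable boundary is backward-invariant under the return map, so the restricted dynamics on each is itself a (lower-complexity) hyperbolic system, which typically carries \emph{infinitely} many periodic orbits --- exponentially many, with growth rate governed by a pressure strictly smaller than $P(\Psi)$. This is precisely the content of the Bowen--Manning auxiliary-subshift argument that the paper invokes later (Proposition \ref{prop: Over counting Estimate}) to bound $\sum_{\tau\in A_T}e^{\int_\tau\Psi}=O(e^{(c-\delta_0)T})$; if your finiteness claim were true, that proposition would be unnecessary. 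The correct reading of item (2) is a statement about \emph{existence} of a symbolic lift of the correct period for all but finitely many flow orbits; the separate phenomenon of boundary orbits acquiring \emph{multiple} symbolic representatives (over-counting) is infinite and must be controlled by the smaller-pressure estimate, not dismissed as finite. You should decouple these two issues: cite the finitely-many-exceptions clause as part of Bowen's theorem, and do not attempt to derive it from finiteness of boundary periodic orbits.
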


The usual proof is to build transversals to the flow satisfying a Markov property, however for our application we will need to do an additional refinement originally due to Dolgopyat~\cite{Dolgopyat_1998}. We choose a finite number of co-dimension one smooth transversals $\mathcal{T}_1,\cdots,\mathcal{T}_N$ to the flow $f_t$ and set $\mathcal{T}=\bigcup_{i=1}^N\mathcal{T}_i$. Then we have an associated Poincar\'e map $P:\mathcal{T}\rightarrow\mathcal{T}$ with return time map $R:\mathcal{T}\rightarrow\mathbb{R}$ such that $P(x)=f_{R(x)}(x)$ for all $x\in\mathcal{T}$. Our symbolic coding space $\Sigma_A^R$ then has the transition matrix $A$ determined by $P:\mathcal{T}\rightarrow\mathcal{T}$ and roof function $R$. 

We note that the above construction also shows that the stable and unstable boundaries of the Markov partition used in the coding are invariant under first return time map of the flow~\cite[Lemma~6.6.7]{Fisher_Hasselblatt_2019}.

By adjusting each $\mathcal{T}_i$ in the flow direction, we produce a new collection of codimension one cross-sections $\Tilde{\mathcal{T}}=\bigcup_{i=1}^N\Tilde{\mathcal{T}}_i$ such that:
\begin{enumerate}[label=(\roman*)]
    \item each $\Tilde{\mathcal{T}}_i$ is foliated by local stable manifolds $W^s_\varepsilon(x)$, and
    \item each $\Tilde{\mathcal{T}}_i$ contains a piece of local unstable manifold, which we denote by $I_i$.
\end{enumerate}
Note that these adjustments do not change the underlying shift space $(\Sigma_A,\sigma)$. We denote the new roof function by $r:\Tilde{\mathcal{T}}\rightarrow\mathbb{R}$. Observe that if $y\in W^s_\varepsilon(x)\cap\Tilde{T}_i$ then $r(y)=r(x)$. Indeed, since the stable and flow directions are jointly integrable we have 
$$f_{r(x)}(W^{s}_\varepsilon(x)\cap~\Tilde{T}_{i})\subset~W^{s}_{\varepsilon}(f_{r(x)}(x))=~W^{s}_\varepsilon(P(x)).$$ Thus, if we collapse the stable foliations to the unstable interval $I_i$, then the roof function $r:\Tilde{\mathcal{T}}\rightarrow\mathbb{R}$ gives rise to a roof function over the union of the $I:=\bigcup_{i=1}^N I_i$, which we also denote by $r$:
\begin{equation*}
    r:I\rightarrow\mathbb{R}.
\end{equation*}
Likewise, the Poincar\'e map gives rise to an expanding map 
\begin{equation*}
    f:I\rightarrow I.
\end{equation*}
We note here the critical fact - as the manifolds $W^{u},W^{s}$ are as smooth as the flow $f_{t}$~\cite[\S6, Theorem~$6.I$]{Ruelle_1979}, the resulting intervals $I_{i}$ are as smooth as the flow $f_{t}$.

\subsection{Coding flows with and without $C^{1}$-foliations}\label{subsec:lifting-functions}
We now focus on the case of a contact Anosov flow $g_{t}:M\to M$ on a three-dimensional manifold $M$.
It is well-known, going back to Hopf, that in this case of geodesic flow on a negatively curved surface, the foliations are $C^{1}$. Actually, the foliations are much smoother, by the smoothness of the Busemann cocycle~\cite[\S3]{Heintze_Hof_1977}. For a $C^{2}$ contact Anosov flow with contact form $\alpha$, it is well-known that $\ker(\alpha)=E^s\oplus E^u$, and hence the joint stable-unstable bundle is smooth. Furthermore, by Katok-Hurder \cite{Hurder_Katok_1990}, we have that $E^s\oplus E^0$ and $E^0\oplus E^u$ are both $C^{1+\varepsilon}$. By intersecting these bundles, we find that $E^s$ and $E^u$ are both $C^{1+\varepsilon}$. One may consult~\cite{Foulon_Hasselblatt_2003} for more refined results.

This in turn allows us to ``transfer'' $C^{1}$-functions between the original system and the symbolic space.
Whenever the foliations are $C^{1}$, we have the following modification to the encoding given by the Markov partition, essentially due to Dolgopyat, as explained by N. Anantharman~\cite[\S4]{Anantharaman_2000}.
As the intervals $I_i$ are as smooth as the flow $g_t$, whereas the maps $f$ and $r$ are only as smooth as the stable and unstable foliations, which in our setting are $C^1$. We can therefore code the contact Anosov flow $g_t$ by a $C^1$ expanding map $f:I\rightarrow I$ together with a $C^1$ roof function $r:I\rightarrow\mathbb{R}$. We choose $0<\gamma<1$ such that $|f'(x)|\geq\gamma^{-1}$.

One of the most important properties of our coding is that it allows us to lift $C^1$ functions on $M$ to $C^1$ functions on $I$:
\begin{lemma}[$C^{1}$ lifting Lemma]
\label{lem: Lifting C1 functions to coding}
    Let $K\in C^1(M)$. Then $k\in C^1(I)$, where for all $x\in I$:
    \begin{equation*}\label{eq:k-defn}
        k(x)=\int_0^{r(x)}K(f_s(x))ds.
    \end{equation*}
    Moreover, there exists a constant $C>0$ depending only on $g_t$ such that $\left\lVert k\right\rVert_{1,t}\leq C\left\lVert K\right\rVert_{C^1}$.
\end{lemma}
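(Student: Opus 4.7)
The plan is to apply the Leibniz rule for differentiating under the integral sign with a variable upper limit. Write $k(x)=F(x,r(x))$ with $F(x,t)=\int_0^t K(f_s(x))ds$. By the construction in \S\ref{subsec:lifting-functions}, $r\in C^1(I)$; the flow $f_s$ is at least $C^1$ (indeed $C^2$ under the contact hypothesis), and $K\in C^1(M)$ by assumption. Thus every factor that enters the chain rule has the required regularity, and the differentiation under the integral is justified.

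Denote by $\partial_u$ a unit tangent vector to $I$, which lies in $E^u$ since each $I_i$ is a piece of local unstable manifold. The Leibniz and chain rules then give
$$k'(x)=K(f_{r(x)}(x))\cdot r'(x)+\int_0^{r(x)}DK_{f_s(x)}\bigl(Df_s|_x\partial_u\bigr)\,ds.$$
Continuity of $k'$ follows immediately from the continuity of $r$, $r'$, $K$, $DK$, and the joint continuity of $(s,x)\mapsto f_s(x)$ and $(s,x)\mapsto Df_s|_x$.

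For the size bound I use compactness. Since $I$ is compact and $r$ is continuous, $r_{\max}:=\sup_I r<\infty$, and $\|r'\|_\infty<\infty$ as $r\in C^1(I)$. For $s\in[0,r_{\max}]$, the quantity $\|(Df_s|_{E^u})_x\|$ is uniformly bounded by some $M=M(g_t,r_{\max})$ depending only on the flow (compactness of $M$ and continuity in $(s,x)$). Combining,
$$|k(x)|\leq r_{\max}\|K\|_\infty,\qquad |k'(x)|\leq \|r'\|_\infty\|K\|_\infty+r_{\max}\cdot M\cdot\|DK\|_\infty\leq C\|K\|_{C^1},$$
which is the claimed estimate after taking the supremum in $x$.

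The argument is essentially routine once the pieces are in place; the only input that really matters is the $C^1$ regularity of the roof function $r$ produced by Dolgopyat's refined coding (together with the $C^{1+\varepsilon}$ regularity of $E^u,E^s$ for a three-dimensional contact Anosov flow via Hurder--Katok). Without this refinement, $r$ would only be H\"older continuous and $k$ would inherit the same defect, which would be insufficient for the transfer-operator/zeta function arguments required later in \S\ref{sec:effective-geodesic}. Consequently the conceptual obstacle is not in the lemma itself but in the prior verification that $r$ is $C^1$, which has already been addressed.
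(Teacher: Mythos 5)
Your proof is correct and follows essentially the same route as the paper: differentiate under the integral with the variable upper limit $r(x)$, then bound the two resulting terms by $\left\lVert r\right\rVert_{C^1}$ and $\left\lVert K\right\rVert_{C^1}$. The only difference is that you explicitly track the factor $Df_s|_{E^u}$ arising from the chain rule (bounding it uniformly for $s\in[0,r_{\max}]$ by compactness), which the paper's shorthand $K'(f_s x)$ silently absorbs into the constant.
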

\begin{proof}
    The continuity and smoothness follow easily from the definition together with the smoothness of the roof function $r(x)$.
    For the norm bound, we have the following trivial bound:
    \begin{equation*}
       \begin{split}
            \sup_{x}\left\lvert k(x)\right\rvert &\leq \left\lVert r\right\rVert_{C^0}\cdot \left\lVert K\right\rVert_{C^0} \\
            &\leq \left\lVert r\right\rVert_{C^0}\cdot \left\lVert K\right\rVert_{C^1}.
       \end{split}
    \end{equation*}
    Moreover, differentiating the integral gives the following:
    \begin{equation*}
        k'(x) = K \left(f_{r(x)}x\right )\cdot r'(x) + \int_{0}^{r(x)}K'\left (f_{s}x\right)ds,
    \end{equation*}
    hence we get
    \begin{equation*}
    \begin{split}
        \left\lvert k'(x)\right\rvert &\leq \left\lVert r'\right\rVert_{C^0}\cdot \left\lVert K\right\rVert_{C^0} + \left\lVert r\right\rVert_{C^0}\cdot \left\lVert K'\right\rVert_{C^0} \\
        &\leq 2\left\lVert r\right\rVert_{C^1}\cdot \left\lVert K\right\rVert_{C^1}.
    \end{split}
    \end{equation*}
\end{proof}

For an Anosov flow $f_{t}:M\to M$, the roof function $r:I\to\mathbb{R}$ is generally only $\alpha_0$-H\"older continuous for some $0<\alpha_0\leq1$.
We remark that examining the proof of the previous Lemma gives the following H\"older lifting Lemma:

\begin{lemma}[H\"older lifting Lemma]
    Let $K\in Lip(M)$. Then $k\in C^{\alpha_0}(I)$ where $k$ is defined as before and $0<\alpha_0\leq1$ is the H\"older exponent of the roof function $r$. Moreover, we have the estimate - $\left\lVert k\right\rVert_{C^\alpha(I)}\ll_{r} \left\lVert K\right\rVert_{Lip(M)}.$
\end{lemma}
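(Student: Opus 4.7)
The plan is to mimic the structure of the $C^{1}$ lifting lemma's proof but to replace derivative estimates with Hölder difference estimates. So the first step will be writing down, for $x,y\in I$ with (WLOG) $r(x)\geq r(y)$, the decomposition
\begin{equation*}
    k(x)-k(y) = \int_{0}^{r(y)}\bigl[K(f_{s}(x))-K(f_{s}(y))\bigr]\,ds + \int_{r(y)}^{r(x)} K(f_{s}(x))\,ds,
\end{equation*}
which cleanly separates the two sources of regularity (the Lipschitz behaviour of $K$ along the orbit, and the Hölder behaviour of the roof $r$).

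Next I would estimate each piece. For the first integral, since the time-$s$ map $f_{s}$ is smooth on $M$ and $s$ ranges over the bounded interval $[0,\|r\|_{C^{0}}]$, it is uniformly Lipschitz in $x$ with some constant $L=L(g_{t},\|r\|_{C^{0}})$; combined with $K\in\lip(M)$ this gives
\begin{equation*}
    \left|\int_{0}^{r(y)}\bigl[K(f_{s}(x))-K(f_{s}(y))\bigr]\,ds\right|\leq \|r\|_{C^{0}}\cdot L\cdot\|K\|_{\lip}\cdot|x-y|.
\end{equation*}
For the second integral, I bound the integrand by $\|K\|_{C^{0}}\leq\|K\|_{\lip}$ and use the $\alpha_{0}$-Hölder continuity of $r$ to get at most $\|K\|_{\lip}\cdot\|r\|_{C^{\alpha_{0}}}\cdot|x-y|^{\alpha_{0}}$.

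Finally, since $\alpha_{0}\leq 1$ and $I$ has finite diameter, $|x-y|\leq\diam(I)^{1-\alpha_{0}}|x-y|^{\alpha_{0}}$, so both contributions can be absorbed into a single $|x-y|^{\alpha_{0}}$ bound, with an overall constant depending only on $r$ and on the flow. Combining with the trivial sup-norm bound $\|k\|_{C^{0}}\leq\|r\|_{C^{0}}\|K\|_{C^{0}}$ yields the desired estimate $\|k\|_{C^{\alpha_{0}}(I)}\ll_{r}\|K\|_{\lip(M)}$.

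I do not expect any serious obstacle here: the only subtlety compared to the $C^{1}$ case is handling the low regularity of $r$, which is exactly the term the Hölder exponent $\alpha_{0}$ is designed to absorb. The mildly nontrivial ingredient is the uniform Lipschitz control of $f_{s}$ in $x$ for $s$ bounded, but this is immediate from compactness of $M$ and smoothness of the flow.
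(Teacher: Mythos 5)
Your proof is correct and follows essentially the same route as the paper: the sup-norm bound is the trivial one, and the H\"older seminorm is controlled by splitting $k(x)-k(y)$ into the contribution from the mismatched integration limits (absorbed by the $\alpha_0$-H\"older continuity of $r$) and the contribution from the mismatched integrands. Your version is in fact slightly more careful than the paper's, whose displayed estimate $\lvert k(y)-k(x)\rvert\leq\int_{r(x)}^{r(y)}\lvert K(f_s(x))\rvert\,ds$ silently drops the term $\int_0^{r(y)}\bigl[K(f_s(x))-K(f_s(y))\bigr]\,ds$ that you handle explicitly via the uniform Lipschitz continuity of $f_s$ for $s$ in a bounded interval.
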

\begin{proof}
    The supermum bound from Lemma~\ref{lem: Lifting C1 functions to coding} holds verbatim.
    We replace the derivative bound as follows
    \begin{equation*}
    \begin{split}
        \left\lvert k(y)-k(x)\right\rvert &\leq \int_{r(x)}^{r(y)}\left\lvert K\left(f_{s}(x)\right)\right\rvert ds \\
        &\leq \left\lVert K \right\rVert_{\infty} \cdot \left\lvert r(y)-r(x)\right\rvert. 
    \end{split}
    \end{equation*}
    The second factor is clearly dominated by $\left\lVert r\right\rVert_{C^{\alpha_0}(I)}\cdot d(x,y)^{\alpha_0}$.
\end{proof}

\section{Effective Equidistribution of Contact Anosov Flows via Zeta Functions}\label{sec:effective-geodesic}
The goal of this section is to proof Theorem \ref{thm: Effective Equidistribution}.
Throughout this section we denote $s=\sigma+it$. 
We assume the results of the previous section, \S\ref{sec:coding}. In particular, we assume we have a finite disjoint collection of $N$ intervals $I=\bigcup_{i=1}^{N} I_{i}$ and differentiable function $f:I\to I$ which is an expanding map, satisfying $\left\lvert f'(x)\right\rvert \geq \gamma^{-1}$ for all $x\in I$.

We start by defining the following norm on $C^1(I)$:

\begin{equation}
\label{eqn: Norm on C1(I)}
     ||h||_{1,t}=\max\left\{||h||_{C^0},\frac{||h'||_{C^0}}{\left\lvert t\right\rvert} \right\},\hspace{1mm} \text{if}\hspace{1mm} \left\lvert t\right\rvert>1,
\end{equation}
and 
\begin{equation}
    ||h||_{1,t}=\max\left\{||h||_{C^0},||h'||_{C^0} \right\},\hspace{1mm} \text{if}\hspace{1mm} \left\lvert t\right\rvert\leq 1.
\end{equation}
For simplicity, we denote $c:=P(\Psi)>0$. Of particular importance to us is the transfer operator 
$\mathcal{L}_{\psi-scr}:C^1(I)\rightarrow C^1(I)$ defined by 
\begin{equation}
    \mathcal{L}_{\psi-scr}w(x)=\sum_{f(y)=x}e^{\psi(y)-scr(y)}w(y),
\end{equation}
where $w\in C^1(I)$ is a differentiable function defined over the coding space $I$, and $\psi\in C^1(I)$ is the lift of $\Psi$ given by lemma \ref{lem: Lifting C1 functions to coding}. Note that the reason we take $\Psi\in C^1(M)$ in theorem \ref{thm: Effective Equidistribution} is so that the the operator $\mathcal{L}_{\psi-scr}$ preserves the Banach space $C^1(I)$.

The key estimate in our analysis is the following bound on the spectral radius of $\mathcal{L}_{-sr}$ in terms of the parameter $t$ due to Dolgopyat~\cite[Corollary~$3$]{Dolgopyat_1998}.
A standard reference in the setting of hyperbolic flows is given in~\cite[Section~$\S7$]{Fisher_Hasselblatt_2019}.
\begin{prop}[Dolgopyat's estimate~\cite{Dolgopyat_1998}~Corollary $3$, see also \cite{Anantharaman_2000}~Theorem~$4.5$]
\label{prop: Dolgopyat Estimate}
    There exist constants $\sigma_0<1$, $C>0$, and $0<\rho<1$ such that 
    whenever $s=\sigma+it$ with $\sigma\geq\sigma_0$ and $\left\lvert t\right\rvert\geq 1$, and $n=p\lfloor\log\left\lvert t\right\rvert\rfloor+l$, where $p\geq0$ and $0\leq l\leq \log\left\lvert t\right\rvert-1$, then we have
    \begin{equation}
    \label{eqn: Dolgopyat Estimate}
        \left\lVert \mathcal{L}_{\psi-scr}^{n} \right\rVert _{1,t}\leq C\rho^{p\lfloor\log\left\lvert t\right\rvert\rfloor}e^{lP(\psi-\sigma cr)}.
    \end{equation}
\end{prop}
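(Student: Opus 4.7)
The plan is to prove the estimate by combining a Lasota-Yorke inequality for $\mathcal{L}_{\psi-scr}$ on $(C^1(I),\|\cdot\|_{1,t})$ with a Dolgopyat-style $C^0$-contraction estimate over blocks of length $\lfloor\log|t|\rfloor$, and then iterating. The oscillatory weight $e^{-itcr}$ produces the cancellation, and for contact Anosov flows the non-degeneracy of $d\alpha$ on $\ker(\alpha)=E^s\oplus E^u$ supplies the uniform non-integrability (UNI) of the stable and unstable foliations needed to exploit that cancellation.

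First I would derive the Lasota-Yorke inequality. Writing $\mathcal{L}_{\psi-scr}^n w(x)=\sum_{f^n(y)=x}e^{S_n\psi(y)-sc\,S_n r(y)}w(y)$ and differentiating under the sum, the bound $|f'|\ge\gamma^{-1}$ and the $C^1$-regularity of $\psi,r$ yield
$$\|(\mathcal{L}_{\psi-scr}^n w)'\|_{C^0}\ \le\ A|t|\,e^{nP(\psi-\sigma cr)}\|w\|_{C^0}+B\gamma^n\|w'\|_{C^0},$$
with $A,B$ independent of $n,t$. The factor $|t|$ on the right is precisely why the norm $\|\cdot\|_{1,t}$ weights the derivative part by $1/|t|$: in that norm the inequality becomes a contraction on the derivative component once $\gamma^n$ is small, so only the $C^0$-part must be controlled in order to estimate $\|\cdot\|_{1,t}$.

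The heart of the argument is then to establish a $C^0$ contraction: there exist $\rho<1$ and an integer $N\asymp\log|t|$ such that
$$e^{-NP(\psi-\sigma cr)}\|\mathcal{L}_{\psi-scr}^N w\|_{C^0}\ \le\ \rho\,\|w\|_{1,t}\qquad\text{for all }w\in C^1(I).$$
Expanding $\mathcal{L}^N$ as a sum over length-$N$ cylinders, the weight carries a phase $e^{-itc\,S_N r(y)}$. The UNI condition asserts that, for a fixed positive proportion of pairs of cylinders, the Birkhoff sums $S_N r$ along the two branches differ by an amount comparable to the cylinder diameter $\asymp\gamma^N$. Choosing $N=\lfloor\log|t|\rfloor$ makes the relative phase $|t|c\gamma^N$ of order one, so those cylinders contribute with genuinely different phases. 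A Cauchy-Schwarz argument using Dolgopyat operators (dampening functions supported on the ``productive'' pairs, chosen with a controlled $C^1$-norm so their effect survives differentiation) converts this phase difference into a strict uniform gain $\rho<1$, measured against the reference density of the normalized real transfer operator $e^{-P(\psi-\sigma cr)}\mathcal{L}_{\psi-\sigma cr}$, whose positive fixed point exists by Ruelle-Perron-Frobenius. In the contact case the UNI input is automatic: the symplectic form $d\alpha$ on $E^s\oplus E^u$ yields a nonvanishing temporal distance function for stable-unstable quadrilaterals, which is exactly the quantitative non-integrability required.

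The main obstacle is this contraction estimate---specifically, selecting the pairs of productive cylinders, constructing the Dolgopyat operators with a derivative bound that survives Cauchy-Schwarz, and extracting a gain $\rho<1$ that is \emph{uniform in} $t$. Once both estimates are in hand, the claimed bound follows by writing $n=p\lfloor\log|t|\rfloor+l$: the trivial bound $\|\mathcal{L}_{\psi-scr}^l\|_{C^0}\le\|\mathcal{L}_{\psi-\sigma cr}^l\|_{C^0}\le Ce^{lP(\psi-\sigma cr)}$ handles the first $l$ iterates, and applying the contraction $p$ successive times to the resulting $C^1$ vector produces the factor $\rho^{p\lfloor\log|t|\rfloor}$, combining to give exactly \eqref{eqn: Dolgopyat Estimate}.
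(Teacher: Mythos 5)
The paper does not prove this proposition at all: it is imported verbatim from Dolgopyat (Corollary 3) and Anantharaman (Theorem 4.5), and the only work the authors do is the remark immediately following it, which verifies that the two hypotheses of the cited result hold for a three-dimensional contact Anosov flow --- namely that $E^s$ and $E^u$ are $C^1$ (via $\ker(\alpha)=E^s\oplus E^u$ and Hurder--Katok) and that the equilibrium state is a Federer (doubling) measure, which is automatic for Gibbs measures when the foliations are one-dimensional. Your outline is a faithful summary of how the cited proof actually goes --- Lasota--Yorke in the $\|\cdot\|_{1,t}$ norm reducing matters to a $C^0$ contraction over blocks of length $\asymp\log|t|$, with the contact form supplying uniform non-integrability via the temporal distance function --- so in that sense you have reconstructed the right strategy for a result the paper treats as a black box.

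As a proof, however, there are two gaps worth naming. First, the step you yourself flag as ``the main obstacle'' --- constructing the Dolgopyat operators and extracting a $t$-uniform gain $\rho<1$ --- is the entire content of the theorem; everything else in your sketch (the Lasota--Yorke inequality, the trivial bound on the remainder block $l$, the iteration producing $\rho^{p\lfloor\log|t|\rfloor}$) is routine. Asserting that ``a Cauchy--Schwarz argument converts the phase difference into a strict uniform gain'' is where the proof would have to live. Second, and more substantively, your sketch omits the doubling (Federer) property of the equilibrium state, which the paper's own remark singles out as one of the two essential inputs. UNI only tells you that a positive proportion of \emph{pairs of cylinders} exhibit phase separation; to conclude that the dampened operator loses a definite fraction of its $L^2$ or $C^0$ mass uniformly in $t$, one must know that the productive set carries a uniformly positive proportion of the relevant conformal/eigen-measure at every scale $\asymp 1/|t|$. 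That is exactly what the doubling condition provides, and without it the gain $\rho$ could degenerate as $|t|\to\infty$. (A minor further point: the block length should be $N\asymp\log|t|$ with a constant depending on $\gamma$ so that cylinder diameters are comparable to $1/|t|$; taking $N=\lfloor\log|t|\rfloor$ exactly, as you do, implicitly assumes $\gamma=e^{-1}$.)
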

\begin{rem}
    Proposition \ref{prop: Dolgopyat Estimate} appears in \cite{Dolgopyat_1998} in the case where $g_t:T^{1}N\to T^{1}N$ is the geodesic flow on a negatively curved surface $N$, however the main ingredients we need are that the strong stable and unstable bundles are both $C^1$, and the equilibrium state $\mu_{\Psi}$ is a Federer measure, i.e., satisfies a doubling condition $\mu_{\Psi}(B(x,2r))\leq C\mu_{\Psi}(B(x,r))$ for some uniform $C>0$ and all $r>0$. As discussed in Section \S\ref{subsec:lifting-functions}, the bundles $E^s,E^u\in C^1$ when $g_t$ is contact Anosov, and the Federer property holds in general for all \emph{Gibbs measures} when the stable and unstable foliations are $1$-dimensional. In particular, both conditions are met when $\dim(M)=3$ and $g_t$ is contact.
\end{rem}
\noindent From now on, when $s\in\mathbb{C}$, we will write $s=\sigma+it$ to represent its decomposition into its real and imaginary parts.

Before discussing the two-variable zeta function, we first recall the standard zeta function associated with the smooth expanding map $f:I\rightarrow I$ and roof function $r:I\rightarrow\mathbb{R}$:
\begin{equation}
    \label{eqn: Standard Zeta Function}
    \zeta_{\Psi}(s):=\exp\left(\sum_{n=1}^\infty\frac{1}{n}\sum_{f^n(x)=x}e^{\psi^{(n)}(x)-scr^{(n)}(x)}\right).
\end{equation}
Here we use the notation $r^{(n)}(x)=r(x)+r(f(x))+\cdots+r(f^{n-1}(x))$, when $f^{(n)}(x)=x$. It is standard that $\zeta_{\Psi}(s)$ is analytic on the half-plane $\Re(s)\geq 1$, except for a simple pole at $s=1$. It follows from Proposition \ref{prop: Dolgopyat Estimate}, as in Pollicott and Sharp~\cite{Pollicott-Sharp_1998}, that there exists $\varepsilon>0$ such that $\zeta_{\Psi}(s)$ has an analytic and zero-free extension to the half-plane $\Re(s)\geq 1-\varepsilon$, except, of course, for the simple pole at $s=1$. Analytic extensions of zeta functions can be related to asymptotic counting formulas for periodic orbits. We will make heavy usage of the (non-trivial) fact that the extension is \emph{zero free} in the strip $1>\Re(s)\geq 1-\epsilon$.

Now, let $K\in C^1(M)$. 
We will construct a new zeta function, taking into account the function $K$, whose analytic extension is related to the effective equidistribution of the function $K$. 

To prove Theorem \ref{thm: Effective Equidistribution}, we will use the following zeta function for $K\in C^1(M)$:
\begin{equation}
    \label{eqn: Two Variable Zeta Function}
    \zeta_{\Psi,K}(s,z):=\exp\left(\sum_{n=1}^\infty\frac{1}{n}\sum_{f^n(x)=x}e^{\psi^{(n)}(x)-scr^{(n)}(x)+zk^{(n)}(x)}\right),
\end{equation}
where the function $k$ is related to $K$ by the formula in~\eqref{eq:k-defn}.
Actually, we are more interested in its logarithmic derivative in the second variable at $z=0$:
\begin{equation}
    \label{eqn: Log Derivative of Zeta Function}
    \begin{split}
    \eta_{\Psi,K}(s) &:=\frac{\partial}{\partial z}\log\zeta_{\Psi,K}(s,z)\bigg|_{z=0}\\
    &=\sum_{n=1}^\infty\frac{1}{n}\sum_{f^n(x)=x}k^{(n)}(x)e^{\psi^{(n)}(x)-scr^{(n)}(x)}.
    \end{split}
\end{equation}
The basic theorem about the analytic extension of $\eta_K$ comes from Parry and Pollicott~\cite{Parry_Pollicott_1990}:
\begin{prop}
\label{prop: Initial Domain of eta}
    $\eta_{\Psi,K}(s)$ is analytic on the half plane $\Re(s)\geq 1$, except for a simple pole at $s=1$. The residue of the pole at $s=1$ is equal to $\frac{1}{c}\int K d\mu$ whenever $k$ arises from $K\in C^1(M)$ as in Lemma~\ref{lem: Lifting C1 functions to coding}.
\end{prop}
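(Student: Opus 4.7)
The plan is to derive the analytic structure of $\eta_{\Psi,K}(s)$ from a first-order perturbation analysis of the two-parameter family of transfer operators $\mathcal{L}_{\psi - scr + zk}$ acting on $C^1(I)$, viewing the factor $e^{zk^{(n)}}$ appearing in $\zeta_{\Psi,K}(s,z)$ as a small analytic perturbation of the standard zeta function of Parry--Pollicott. After exchanging $\partial_z$ with the series, it suffices to understand $\zeta_{\Psi,K}(s,z)$ itself as a function of $s$ for $z$ in a small neighborhood of $0$. The classical trace-formula identity $\sum_{f^n(x)=x}e^{\phi^{(n)}(x)} = \operatorname{tr}(\mathcal{L}_\phi^n)$, interpreted via a nuclear realization of $\mathcal{L}_\phi$ on a suitable $C^1$-space as in Parry--Pollicott~\cite{Parry_Pollicott_1990}, combined with Plemelj--Smithies, yields a Fredholm-type factorization $\zeta_{\Psi,K}(s,z)^{-1} = d(s,z)$ with $d$ jointly analytic on the relevant domain.

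Next, I would invoke Ruelle--Perron--Frobenius theory for the $C^1$ expanding Markov map $f:I\to I$: for $(s,z)$ near $(1,0)$, the operator $\mathcal{L}_{\psi-scr+zk}$ has a simple, isolated leading eigenvalue $\lambda(s,z)$, the rest of its spectrum being contained in a strictly smaller disk. Kato's analytic perturbation theory delivers joint analyticity of $\lambda(s,z)$, the associated eigenfunction $h_{s,z}$, and the dual eigenfunctional $\nu_{s,z}$ (normalized by $\nu_{s,z}(h_{s,z})=1$); the identity $c = P(\Psi)$ together with Abramov's suspension formula for pressure forces $\lambda(1,0)=1$. Pulling this leading eigenvalue out of $d(s,z)$ yields
\begin{equation*}
\log\zeta_{\Psi,K}(s,z) = -\log(1-\lambda(s,z)) + H(s,z),
\end{equation*}
with $H$ analytic near $(1,0)$; the absence of other eigenvalues of modulus one for $\mathcal{L}_{\psi-(1+it)cr}$ with $t\neq 0$, which follows from the non-lattice/weak-mixing condition on $r$, extends analyticity of $H(\cdot,0)$ to a neighborhood of the rest of the line $\Re(s)=1$. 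Differentiating in $z$ at $z=0$,
\begin{equation*}
\eta_{\Psi,K}(s) = \frac{\partial_z \lambda(s,0)}{1-\lambda(s,0)} + \partial_z H(s,0),
\end{equation*}
so the unique singularity on $\Re(s)\geq 1$ sits at $s=1$, and it is a simple pole because $\lambda'(1,0) = -c\int r\, d\tilde\mu \neq 0$, where $\tilde\mu$ denotes the equilibrium measure on $I$ for the potential $\psi - cr$.

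Standard first-order perturbation theory for the simple eigenvalue then gives
\begin{equation*}
\partial_z\lambda(1,0) = \nu_{1,0}\bigl(\mathcal{L}_{\psi-cr}(k\, h_{1,0})\bigr) = \int k\, d\tilde\mu,
\end{equation*}
using $\mathcal{L}_{\psi-cr}^*\nu_{1,0}=\nu_{1,0}$ and $d\tilde\mu = h_{1,0}\, d\nu_{1,0}$. Therefore $\operatorname{Res}_{s=1}\eta_{\Psi,K} = \int k\, d\tilde\mu / (c\int r\, d\tilde\mu)$. To translate this to an integral on $M$, I would appeal to the Ambrose--Kakutani representation of the suspension flow: $\mu_\Psi$ pulls back under $\pi$ to the normalized product measure $(d\tilde\mu \times dt)/\int r\, d\tilde\mu$ on $\Sigma_A^r$. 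Combining this with the definition $k(x) = \int_0^{r(x)}K(f_s x)\, ds$ and Fubini gives $\int K\, d\mu_\Psi = \int k\, d\tilde\mu / \int r\, d\tilde\mu$, so $\operatorname{Res}_{s=1}\eta_{\Psi,K} = \frac{1}{c}\int K\, d\mu_\Psi$ as asserted.

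The main technical obstacle I anticipate is the rigorous trace/Fredholm-determinant identification on $C^1(I)$ and the verification that $\eta_{\Psi,K}$ extends across the full line $\Re(s)=1$ minus the point $s=1$: for the one-variable zeta function these are the central content of Parry--Pollicott~\cite{Parry_Pollicott_1990} and Pollicott--Sharp~\cite{Pollicott-Sharp_1998}, while in the two-variable setting they transfer without new difficulty because $zk$ enters as a small analytic perturbation, with the quantitative estimates of Dolgopyat (Proposition~\ref{prop: Dolgopyat Estimate}) entering only in the later steps where one pushes the analytic continuation into a strip $\Re(s) \geq 1 - \varepsilon$.
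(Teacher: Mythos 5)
Your proposal is correct in substance and follows essentially the same route as the paper, which for this proposition simply defers to Parry--Pollicott: the factorization $\log\zeta_{\Psi,K}(s,z)=-\log(1-\lambda(s,z))+H(s,z)$ via the Ruelle--Perron--Frobenius theorem and Kato perturbation theory, the first-order formula $\partial_z\lambda(1,0)=\int k\,d\tilde\mu$, and the Abramov/suspension identification $\int K\,d\mu_\Psi=\int k\,d\tilde\mu\big/\int r\,d\tilde\mu$ are exactly the ingredients behind the cited result, and your residue computation $\frac{1}{c}\int K\,d\mu_\Psi$ is right. The one point to repair is the mechanism you invoke for the factorization: the identity $\sum_{f^n(x)=x}e^{\phi^{(n)}(x)}=\mathrm{tr}(\mathcal{L}_\phi^n)$ is false on $C^1(I)$ (the transfer operator is not trace class there, and the flat-trace version of this identity carries additional determinant weights $\bigl(1-((f^n)'(x))^{-1}\bigr)^{-1}$), so the Plemelj--Smithies/Fredholm route is not available in this regularity class. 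What Parry--Pollicott actually do --- and what the paper itself does later in its Lemma on Ruelle-type estimates --- is compare $Z_n(s,z)$ directly with $\lambda(s,z)^n$ up to an exponentially smaller error using the spectral gap; summing $\frac{1}{n}Z_n$ then yields the same $-\log(1-\lambda)$ singularity plus an analytic remainder, after which your perturbation argument goes through unchanged. You should also note, as the paper does, that the passage from the two-sided subshift of Parry--Pollicott to the one-sided map $f:I\to I$ is harmless because $r$ is constant along local stable leaves.
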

\begin{rem}
    Technically speaking, Parry and Pollicott define their zeta function and prove this proposition for suspensions over subshifts of finite type $\Sigma_A^r$, with the inner sum defined over $\sigma^n(x)=x$ instead of $f^n(x)=x$. However, since there is a one-to-one correspondence between periodic orbits of $\sigma$ and $f$, since and $r$ is constant along local stable leaves, the two zeta functions are the same.
\end{rem}

The main technical step in proving Theorem~\ref{thm: Effective Equidistribution} is to improve Proposition~\ref{prop: Initial Domain of eta} by extending $\eta_K$ to a strictly larger half plane (uniformly larger in terms of $K$).
\begin{theorem}[Growth estimate for two-variable zeta functions]
\label{thm: Uniform extension of eta}
    There exists $\sigma<1$ such that for every $K\in C^1(M)$, $\eta_{\Psi,K}(s)$ has an analytic extension to the half plane $\Re(s)\geq \sigma$. Moreover, we have the bound
    \begin{equation}
        \label{eqn: Bounds of eta}
        |\eta_{\Psi,K}(\sigma+it)|=O\left(\left\lVert K\right\rVert_{C^1}\left\lvert t\right\rvert^\alpha\right),
    \end{equation}
    as $\left\lvert t\right\rvert\rightarrow\infty$ for some $0<\alpha<1$.
\end{theorem}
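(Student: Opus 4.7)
The plan is to follow Pollicott–Sharp's approach to the meromorphic extension of dynamical zeta functions, adapted to the two-variable setting so as to keep the dependence on $K$ explicit and linear.

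First, I would simplify the defining series. On $n$-periodic orbits, $e^{\psi^{(n)}-scr^{(n)}}$ is invariant under $f$, so orbit averaging collapses the factor $\frac{1}{n}k^{(n)}(x)$ to $k(x)$, giving
\begin{equation*}
\eta_{\Psi,K}(s) = \sum_{n=1}^\infty \sum_{f^n(x)=x} k(x)\,e^{\psi^{(n)}(x) - scr^{(n)}(x)}.
\end{equation*}
For the one-dimensional expanding map $f$, Ruelle's trace formula reads $\operatorname{tr}(M_k\mathcal{L}_{\psi-scr}^n) = \sum_{f^n(x)=x}\frac{k(x)e^{\psi^{(n)}(x)-scr^{(n)}(x)}}{1-(f^n)'(x)^{-1}}$, where $M_k$ denotes multiplication by $k$. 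Expanding the denominator as a geometric series in $(f^n)'(x)^{-1}$ decomposes the periodic sum into the principal term $\operatorname{tr}(M_k\mathcal{L}_{\psi-scr}^n)$ plus correction terms of the form $\operatorname{tr}(M_k\mathcal{L}_{\psi-m\log|f'|-scr}^n)$ for $m\geq 1$; each correction is governed by a pressure reduced by $m\log\gamma^{-1}$ and so the associated series extends holomorphically to a half-plane strictly larger than $\Re(s)\geq\sigma$, leaving the singular behaviour concentrated in the $m=0$ term.

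Second, I would establish the analytic extension via the resolvent. The zero-freeness of $\zeta_\Psi$ on $\Re(s)\geq\sigma$, $s\neq 1$ --- a consequence of Pollicott–Sharp's application of Proposition~\ref{prop: Dolgopyat Estimate} to the standard zeta function --- is equivalent to $1$ not lying in the spectrum of $\mathcal{L}_{\psi-scr}$ acting on $C^1(I)$ in that strip, so $(1-\mathcal{L}_{\psi-scr})^{-1}$ is bounded there. Summing the principal trace terms then yields $\sum_{n\geq 1}\operatorname{tr}(M_k\mathcal{L}_{\psi-scr}^n) = \operatorname{tr}\bigl((1-\mathcal{L}_{\psi-scr})^{-1}\mathcal{L}_{\psi-scr}M_k\bigr)$, a meromorphic function of $s$ on $\Re(s)\geq\sigma$ whose only pole is the simple one at $s=1$ already identified in Proposition~\ref{prop: Initial Domain of eta}.

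Third, to obtain the growth estimate on $\Re(s)=\sigma$, I would apply Proposition~\ref{prop: Dolgopyat Estimate} term-by-term. The norm $\|\cdot\|_{1,t}$ is designed precisely so that $\|M_k\|_{1,t}\ll \|k\|_{1,t}\ll\|K\|_{C^1}$ by Lemma~\ref{lem: Lifting C1 functions to coding}, so
\begin{equation*}
\bigl|\operatorname{tr}(M_k\mathcal{L}_{\psi-scr}^n)\bigr| \ll \|K\|_{C^1}\,\|\mathcal{L}_{\psi-scr}^n\|_{1,t} \leq C\|K\|_{C^1}\rho^{p\lfloor\log|t|\rfloor}e^{lP(\psi-\sigma cr)}
\end{equation*}
for $n = p\lfloor\log|t|\rfloor + l$. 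Summing the geometric decay in $p$ against the growth $e^{lP(\psi-\sigma cr)}$ in $l\in[0,\log|t|)$ yields a bound of order $\|K\|_{C^1}|t|^\alpha$ for some $\alpha\in(0,1)$ determined by the balance between $\log\rho^{-1}$ and $P(\psi-\sigma cr)$; choosing $\sigma$ sufficiently close to $1$ keeps $\alpha<1$. The range $|t|\leq 1$ is handled by the meromorphy statement above and the continuous dependence of the resolvent on $s$ in a bounded region. The main technical obstacle is keeping the dependence on $\|K\|_{C^1}$ strictly linear through both the resolvent expansion and the justification of the Ruelle trace formula at the level of $C^1(I)$: the tailored form of the $\|\cdot\|_{1,t}$-norm, which makes $M_k$ act with norm proportional to $\|K\|_{C^1}$ independent of $t$, combined with Dolgopyat's oscillatory cancellation, is precisely what prevents the appearance of worse powers of $\|K\|_{C^1}$ or the critical exponent $\alpha=1$.
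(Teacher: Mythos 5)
There is a genuine gap at the heart of your argument: the Ruelle--Grothendieck trace formula $\operatorname{tr}(M_k\mathcal{L}_{\psi-scr}^n)=\sum_{f^n(x)=x}\frac{k(x)e^{\psi^{(n)}(x)-scr^{(n)}(x)}}{1-(f^n)'(x)^{-1}}$ is a nuclearity statement, valid for transfer operators of \emph{analytic} expanding maps acting on spaces of holomorphic functions. Here $f$, $r$ and $\psi$ are only $C^1$, and $\mathcal{L}_{\psi-scr}$ acting on $C^1(I)$ is not trace class (not even compact), so neither the trace formula, nor the geometric-series decomposition into the operators $\mathcal{L}_{\psi-m\log|f'|-scr}$, nor the identity $\sum_n\operatorname{tr}(M_k\mathcal{L}^n)=\operatorname{tr}\bigl((1-\mathcal{L})^{-1}\mathcal{L}M_k\bigr)$ is available; and even granting a trace, a trace is not controlled by the operator norm, so your step $|\operatorname{tr}(M_k\mathcal{L}^n)|\ll\|K\|_{C^1}\|\mathcal{L}^n\|_{1,t}$ has no justification. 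The paper's entire technical content at this point is precisely the $C^1$ substitute for the trace formula: a Ruelle-type estimate (Lemma~\ref{lem: Key Bound for Eta}) showing that $Z_n(\psi-scr,k)$ equals the finite sum of pointwise evaluations $\sum_{i}\mathcal{L}_{\psi-scr}^n(\chi_{I_i}k)(x_i)$ up to an error $C_1\|k\|_{1,t}|t|^{1+\varepsilon}n\rho_1^n$, proved by telescoping over cylinder sets $I_{\underline{i}}$ and using the contraction $d(x_{\underline{i}},x_{\underline{j}(\underline{i})})\leq\gamma^m$ of inverse branches. You would need to supply this (or an equivalent) approximation argument; it cannot be waved through via nuclear traces.

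A second, quantitative gap: even after the correct approximation, the bound one obtains on the line $\Re(s)=\sigma_1$ is of order $\|k\|_{1,t}|t|^{1+\varepsilon}$ --- the error term in the Ruelle-type estimate carries an unavoidable factor $|t|^{1+\varepsilon}$ coming from converting $\|\cdot\|_{1,t}$ into $C^1$ oscillation bounds --- which is \emph{worse} than $|t|$, not better. Your claim that direct summation of the Dolgopyat estimate ``yields a bound of order $\|K\|_{C^1}|t|^\alpha$ for some $\alpha\in(0,1)$'' therefore does not follow as stated. The paper closes this by a Phragm\'en--Lindel\"of interpolation between the $|t|^{1+\varepsilon}$ bound at $\Re(s)=\sigma_1$ and the trivial $O(\|k\|_{1,t})$ bound on $\Re(s)=1+\delta$, choosing an intermediate abscissa $\sigma_2$ where the interpolated exponent $\alpha(\sigma_2)$ drops below $1$. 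This convexity step is essential to the statement of the theorem and is absent from your proposal.
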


We will use the notation
    \begin{equation*}
        Z_n(\psi-scr,k):=\sum_{f^n(x)=x}k^{(n)}(x)e^{\psi^{(n)}(x)-scr^{(n)}(x)}.
    \end{equation*}
Observe that we have the relation
    \begin{equation*}
        \eta_{\Psi,K}(s)=\sum_{n=1}^\infty\frac{1}{n} Z_n(\psi-scr,k).
    \end{equation*}
This follows directly from the definition of $\eta_K$ in~\eqref{eqn: Log Derivative of Zeta Function}. Next, we relate $Z_n(\psi-scr,k)$ to the transfer operator $\mathcal{L}_{\psi-scr}$.
Recall that from Proposition~\ref{prop: Dolgopyat Estimate} we have a parameter $\rho>0$. Furthermore, under the coding assumption, we have $\gamma\in (0,1)$ such that $\left\lvert f'\right\rvert >\gamma^{-1}$.

The next lemma establishes a key inequality relating the zeta function to the repeated application of the transfer operator.

\begin{lemma}[Ruelle-type estimate]
\label{lem: Key Bound for Eta}
    For \emph{any} $\rho_0$ such that $\max\{\rho,\gamma\}<\rho_0<1$, there exist $C_{1},\epsilon$ and $0<\rho_1<1$ such that for points $x_i\in I_i$ and $\left\lvert t\right\rvert\geq 1$ we have

    \begin{equation}
    \label{eqn: Key Bound for Eta}
        \left|Z_n(\psi-scr,k)-\sum_{i=1}^N\mathcal{L}_{\psi-scr}^n(\chi_{I_{i}}k)(x_i) \right|\leq C_1 \lVert k\rVert _{1,t}\cdot \left\lvert t\right\rvert^{1+\varepsilon}\cdot n\cdot \rho_1^n.
    \end{equation}
\end{lemma}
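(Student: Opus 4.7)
The plan is a Markov-structure/bounded-distortion argument in the style of Pollicott--Sharp and Dolgopyat. First, using the Markov partition I would set up the standard bijection between admissible $n$-cylinders and periodic orbits: each fixed point $x$ of $f^n$ lies in a unique admissible $n$-cylinder $U_\alpha=\bigcap_{j=0}^{n-1}f^{-j}(I_{i_j})$, and conversely each admissible cylinder contains a unique periodic point $x_\alpha$ (obtained as the fixed point of the contracting inverse branch $(f^n|_{U_\alpha})^{-1}$) together with a unique preimage $y_\alpha\in U_\alpha$ of the reference point $x_i\in I_i$ under $f^n$. The expansion bound $|f'|\geq\gamma^{-1}$ gives $\mathrm{diam}(U_\alpha)\leq C\gamma^n$ and, iterating, $|f^jx_\alpha-f^jy_\alpha|\leq C\gamma^{n-j}$ for each $0\leq j\leq n$.

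Next, I would unfold both sides of \eqref{eqn: Key Bound for Eta} as sums over admissible cylinders and estimate cylinder-by-cylinder. Expanding the Birkhoff sum $k^{(n)}(x_\alpha)=\sum_{j=0}^{n-1}k(f^jx_\alpha)$, the term-by-term comparison rests on (i) $C^1$-regularity of $k$ with $\|k'\|_{C^0}\leq|t|\cdot\|k\|_{1,t}$ from the definition \eqref{eqn: Norm on C1(I)}; (ii) bounded distortion for the exponential weight $e^{\phi^{(n)}}$ with $\phi:=\psi-scr$; and (iii) Proposition~\ref{prop: Dolgopyat Estimate} to control the $\|\cdot\|_{1,t}$-norm of the transfer operator iterates when aggregating the cylinder contributions. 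The polynomial prefactor $n$ in \eqref{eqn: Key Bound for Eta} reflects the $n$-term Birkhoff-sum structure of $k^{(n)}$, while the geometric factor $\rho_1^n$ (for any $\rho_1>\max\{\rho,\gamma\}$) combines the contraction of inverse branches with Dolgopyat's bound.

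The main obstacle is the bounded-distortion step for the exponential weight. A direct estimate yields $|\phi^{(n)}(x_\alpha)-\phi^{(n)}(y_\alpha)|\lesssim|s|\sum_{j=0}^{n-1}\gamma^{n-j}=O(|t|)$, so the ratio $e^{\phi^{(n)}(x_\alpha)-\phi^{(n)}(y_\alpha)}$ is exponentially large in $|t|$ rather than close to $1$, and bounded distortion fails naively. The Dolgopyat-style workaround is to subdivide each $n$-cylinder further into sub-pieces of diameter $O(|t|^{-1})$, on which the exponent varies by $O(1)$; the number of sub-pieces contributes the $|t|^{1+\varepsilon}$ factor (with $\varepsilon$ absorbing the H\"older losses), while $\rho_1^n$ still dominates this polynomial cost because $\rho_1<1$ uniformly. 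The $C^1$-regularity of the stable and unstable foliations for three-dimensional contact Anosov flows, invoked in Section~\ref{subsec:lifting-functions}, is essential here for the subdivision argument to remain within the $C^1$ framework in which Proposition~\ref{prop: Dolgopyat Estimate} is formulated.
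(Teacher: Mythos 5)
Your setup (cylinders, the periodic point $x_\alpha$ versus the preimage $y_\alpha$ of the reference point, the $\|\cdot\|_{1,t}$ norm, Dolgopyat's estimate) is the right cast of characters, but the core mechanism is missing, and the workaround you propose for the obstacle you correctly identify does not work. The problem with the direct cylinder-by-cylinder comparison is worse than a failure of bounded distortion: since $f^n(y_\alpha)$ is the fixed reference point while $f^n(x_\alpha)=x_\alpha$, the images $f^jx_\alpha$ and $f^jy_\alpha$ are at distance $O(1)$ for $j$ near $n$, so the phase $tcr^{(n)}$ differs by $O(|t|)$ between the two points \emph{for every} $n$ --- this does not improve as $n\to\infty$. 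Consequently the best term-by-term bound is of the order of the terms themselves, and summing over cylinders yields $O\bigl(|t|\,e^{nP(\psi-\sigma cr)}\,\|k\|_{1,t}\bigr)$, which \emph{grows} exponentially rather than decaying like $\rho_1^n$. Subdividing cylinders into pieces of diameter $O(|t|^{-1})$ does not help: it neither brings the two fixed comparison points $x_\alpha,y_\alpha$ closer together nor supplies any source of exponential decay in $n$; and your stated use of Proposition~\ref{prop: Dolgopyat Estimate} ``when aggregating'' has nothing to act on, because in a one-step comparison the entire weight $e^{\phi^{(n)}}$ has already been expended on individual terms and no power of the transfer operator remains.

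The missing idea is a telescoping over intermediate cylinder depths. Writing $\underline{j}(\underline{i})$ for the string $\underline{i}$ with its last symbol deleted, one expresses the left-hand side of \eqref{eqn: Key Bound for Eta} as
\begin{equation*}
\sum_{m=2}^{n}\sum_{|\underline{i}|=m}\Bigl(\mathcal{L}_{\psi-scr}^{n}(\chi_{I_{\underline{i}}}k)(x_{\underline{i}})-\mathcal{L}_{\psi-scr}^{n}(\chi_{I_{\underline{i}}}k)(x_{\underline{j}(\underline{i})})\Bigr),
\end{equation*}
so that each summand is the difference of a \emph{single} function $\mathcal{L}_{\psi-scr}^{\,n-m}\bigl(\mathcal{L}_{\psi-scr}^{m}(\chi_{I_{\underline{i}}}k)\bigr)$ at two points lying in the same $(m-1)$-cylinder, hence at distance $\le\gamma^{m-1}$. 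The mean value theorem then costs one factor of $|t|$ (converting $C^1$ to $\|\cdot\|_{1,t}$ via \eqref{eqn: Norm on C1(I)}), Proposition~\ref{prop: Dolgopyat Estimate} applied to the outer $n-m$ iterates supplies the decay that beats $e^{(n-m)P(\psi-\sigma cr)}$, the inner factor is bounded directly by $\|\mathcal{L}_{\psi-scr}^m(\chi_{I_{\underline{i}}}k)\|_{1,t}\le C\,|e^{\psi^{(m)}-scr^{(m)}}|_{C^0}\|k\|_{1,t}$ (here the $O(|t|)$ growth of the derivative of the weight along a contracting inverse branch is exactly absorbed by the $|t|^{-1}$ in the norm), and $\gamma^m$ kills the pressure term $e^{mP(\psi-\sigma cr)}$ coming from the sum over $m$-cylinders. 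The factor $n$ in \eqref{eqn: Key Bound for Eta} is the number of telescoping stages, and $|t|^{\varepsilon}$ arises from the remainder $l<\log|t|$ in Dolgopyat's estimate via $e^{lP}\rho_0^{-l}\le|t|^{P-\log\rho_0}$ --- not from counting sub-pieces as you suggest.
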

\begin{proof}

We say the a string of length $n$, $\underline{i}=(i_0,\cdots,i_{n-1})$ ($1\leq i_j\leq N$), is \emph{admissible string} if $I_{i_0}\cap f^{-1}(I_{i_1})\cap\cdots\cap f^{-(n-1)}(I_{i_{n-1}}) \neq \emptyset$. We denote $I_{\underline{i}}:=I_{i_0}\cap f^{-1}(I_{i_1})\cap\cdots\cap f^{-(n-1)}(I_{i_{n-1}})$ and write $\lvert \underline{i}\rvert=n$. If $\underline{i}=(i_0,\cdots,i_{m-1})$, we use the notation $\underline{j}(\underline{i}):=~(i_0,\cdots,i_{m-2})$, i.e., the string obtained by removing the last entry of $\underline{i}$. 
For each admissible string $\underline{i}$ we choose points $x_{\underline{i}}\in I_{\underline{i}}$ in the following manner. If $\left\lvert\underline{i}\right\rvert = 1$, we fix $x_{\underline{i}}=x_i$, for the given points $x_i\in I_i$. 
For admissible strings of length greater than one, we choose $x_{\underline{i}}\in I_{\underline{i}}$ such that $f^{\left\lvert \underline{i}\right\rvert}(x_{\underline{i}})=x_{\underline{i}}$ if possible and if not we let $x_{\underline{i}}$ to be an arbitrary point in $I_{\underline{i}}$.

With this notation, we can easily see that

\begin{equation}
     Z_n(\psi-scr,k)=\sum_{|\underline{i}|=n}\mathcal{L}_{\psi-scr}^n(\chi_{I_{\underline{i}}}k)(x_{\underline{i}}).
\end{equation}

To obtain estimate (\ref{eqn: Key Bound for Eta}), we perform a standard telescoping trick:
\begin{equation}\label{eqn: Telescoping sum trick}
    \begin{split}
        \left|Z_n(\psi-scr,k)-\sum_{i=1}^N\mathcal{L}_{\psi-scr}^n(\chi_{I_{i}}k)(x_i) \right| & =\left|\sum_{|\underline{i}|=n}\mathcal{L}_{\psi-scr}^n(\chi_{I_{\underline{i}}}k)(x_{\underline{i}})-\sum_{i=1}^N\mathcal{L}_{\psi-scr}^n(\chi_{I_{i}}k)(x_i) \right|
    \\
    &= \left|\sum_{m=2}^{n}\sum_{|\underline{i}|=m}\mathcal{L}_{\psi-scr}^n(\chi_{I_{\underline{i}}}k)(x_{\underline{i}})-\mathcal{L}_{\psi-scr}^n(\chi_{I_{\underline{i}}}k)(x_{{\underline{j}(\underline{i})}})
    \right|\\
    &\leq 
    \sum_{m=2}^n \lVert\mathcal{L}_{\psi-scr}^{n-m}\rVert_{1,t}\cdot \lvert t\rvert \cdot \sum_{|\underline{i}|=m}\lVert\mathcal{L}_{\psi-scr}^m(\chi_{I_{\underline{i}}}k)\rVert_{1,t} \cdot d(x_{\underline{i}},x_{\underline{j}(\underline{i})}).
    \end{split}
\end{equation}

To see where the factor of $\left\lvert t\right\rvert$ comes from in the last expression, we observe that
\begin{equation*}
    \lvert\mathcal{L}_{\psi-scr}^n(\chi_{I_{\underline{i}}}k)(x_{\underline{i}})-\mathcal{L}_{\psi-scr}^n(\chi_{I_{\underline{i}}}k)(x_{{\underline{j}(\underline{i})}})\rvert \leq \lVert \mathcal{L}_{\psi-scr}^n(\chi_{I_{\underline{i}}}k)\rVert_{C^1} \cdot d(x_{\underline{i}},x_{\underline{j}(\underline{i})}),
\end{equation*}
and by the definition of the $\lVert\cdot\rVert_{1,t}$ norm

\begin{equation*}
\begin{split}
    \lvert \mathcal{L}_{\psi-scr}^n(\chi_{I_{\underline{i}}}k)\rvert_{C^1}&\leq \lVert\mathcal{L}_{\psi-scr}^n(\chi_{I_{\underline{i}}}k)\rVert_{1,t} \cdot \lvert t\rvert\\
    &= 
    \lVert\mathcal{L}_{\psi-scr}^{n-m}(\mathcal{L}_{\psi-scr}^m(\chi_{I_{\underline{i}}}k))\rVert_{1,t}\cdot \lvert t\rvert \\
    &\leq
    \lVert\mathcal{L}_{\psi-scr}^{n-m}\rVert_{1,t} \cdot \lVert\mathcal{L}_{\psi-scr}^m(\chi_{I_{\underline{i}}}k)\rVert_{1,t}\cdot \lvert t\rvert.
\end{split}
\end{equation*}

Now it remains to estimate the individual parts of equation (\ref{eqn: Telescoping sum trick}). First, by Proposition~\ref{prop: Dolgopyat Estimate} and our choice of $\rho_0$ we have  
\begin{equation}
\label{eqn: Telescope Bound 1}
     \lVert\mathcal{L}_{\psi-scr}^{n-m}\rVert_{1,t}\leq C\rho^{p\lfloor\log\left\lvert t\right\rvert\rfloor}e^{lP(\psi-\sigma cr)}< C\rho_0^{p\lfloor\log\left\lvert t\right\rvert\rfloor}e^{lP(\psi-\sigma cr)},
\end{equation}
where $n-m=p\lfloor\log\left\lvert t\right\rvert\rfloor+l$, with $0\leq l\leq\log\left\lvert t\right\rvert-1$. Moreover, since $|f'|\geq\gamma^{-1}$ with $\gamma\in (0,1)$, we have 
\begin{equation}
\label{eqn: Telescope Bound 2}
    d(x_{\underline{i}},x_{\underline{j}(\underline{i})})\leq\gamma^m
    < \rho_0^m
\end{equation}
since these points lie in the same inverse branch of $f^m$. It remains to estimate $\lVert \mathcal{L}_{\psi-scr}^m(\chi_{I_{\underline{i}}}k)\rVert_{1,t}$ when $|\underline{i}|=m$. Notice first that since the transfer operator sums over all inverse branches of $f^m$, the only summand which will make the term $\chi_{\underline{i}}$ non-zero is the inverse branch corresponding to the multi-index $\underline{i}$. Therefore we have the formula
\begin{equation*}
    \mathcal{L}_{\psi-scr}^m(\chi_{I_{\underline{i}}}k)=e^{(g^{(m)}-scr^{(n)})\circ f^{-m}_{\underline{i}}}(k\circ f^{-m}_{\underline{i}}).
\end{equation*}
We have the obvious bound
\begin{equation*}
    \left\lvert e^{(g^{(m)}-scr^{(m)})\circ f^{-m}_{\underline{i}}}(k\circ f^{-m}_{\underline{i}})\right\rvert _{C^0}\leq \left\lvert e^{g^{(m)}-scr^{(m)}}\right\rvert _{C^0} \cdot \left\lVert k\right\rVert _{C^0}.
\end{equation*}
Using the product and chain rules, together with the fact that $\left\lvert f^{-m}_{\underline{i}}\right\rvert_{C^1}\leq\gamma^m$, we have
\begin{equation*}
    \begin{split}
      &\left\lvert e^{(g^{(m)}-scr^{(m)})\circ f^{-m}_{\underline{i}}}(k\circ f^{-m}_{\underline{i}})\right\rvert _{C^1} \\
    &\ \leq \left\lvert e^{g^{(m)}-scr^{(m)}}\right\rvert_{C^0} \cdot \left\lvert k\circ f^{-m}_{\underline{i}}\right\rvert_{C^1}+\left\lvert e^{(g^{(m)}-scr^{(m)})\circ f^{-m}_{\underline{i}}}\right\rvert_{C^1}\cdot \left\lVert k\right\rVert_{C^0} \\
    &\ \leq \left\lvert e^{g^{(m)}-scr^{(m)}}\right\rvert_{C^0}\cdot \left\lVert k\right\rVert_{C^1} \cdot \gamma^m
    +\left\lvert e^{g^{(m)}-scr^{(m)}}\right\rvert_{C^0}\cdot \left\lvert (g^{(m)}-scr^{(m))}\circ f^{-m}_{\underline{i}}\right\rvert_{C^1} \cdot \left\lVert k\right\rVert_{C^0}.
    \end{split}
\end{equation*}
By definition,
\begin{equation*}
   (g^{(m)}- scr^{(m)})\circ f^{-m}_{\underline{i}}=\sum_{i=0}^{m-1}(\psi-r)\circ(f^i\circ f^{-m}_{\underline{i}}),
\end{equation*}
and since $f^i\circ f^{-m}_{\underline{i}}$  is an inverse branch of $f^{m-i}$, we have $|f^i\circ f^{-m}_{\underline{i}}|_{C^1}\leq\gamma^{m-i}$. Therefore
\begin{equation*}
\begin{split}
    \left\lvert (g^{(m)}- scr^{(m)})\circ f^{-m}_{\underline{i}}\right\rvert_{C^1}&\leq \sum_{i=0}^{m-1}(\|g\|_{C^1}+\lvert s\rvert c\|r\|_{C^1})\cdot \gamma^{m-i}\\
    &\leq C'(\|g\|_{C^1}+ \left\lvert t\right\rvert \cdot c\| r\|_{C^1})\\
    &\leq C'(\|g\|_{C^1}+  c\| r\|_{C^1})\left\lvert t\right\rvert.
\end{split}
\end{equation*}

Absorbing the norms $\|r\|_{C^1}$ and $\|g\|_{C^1}$ into $C'$, we get
\begin{equation*}
     \left\lvert e^{g^{(m)}-scr^{(m)}\circ f^{-m}_{\underline{i}}}(k\circ f^{-m}_{\underline{i}})\right\rvert_{C^1} \leq
    C' \left\lvert e^{g^{(m)}-scr^{(m)}}\right\rvert_{C^0} \cdot \left(\left\lvert t\right\rvert \cdot \left\lVert k\right\rVert_{C^0}+\left\lVert k\right\rVert_{C^1}\right).
\end{equation*}
Thus
\begin{equation*}
    \begin{split}
        \frac{\left\lvert \mathcal{L}_{\psi-scr}^m(\chi_{I_{\underline{i}}}k)\right\rvert_{C^1}}{\left\lvert t\right\rvert} &\leq C' \left\lvert e^{g^{(m)}-scr^{(m)}}\right\rvert_{C^0}\left(\left\lVert k\right\rVert_{C^0}+\frac{\left\lVert k\right\rVert_{C^1}}{\left\lvert t\right\rvert}\right)\\ 
        &\leq
    C' \left\lvert e^{g^{(m)}-scr^{(m)}}\right\rvert_{C^0} \cdot \left\lVert k\right\rVert_{1,t}.
    \end{split}
\end{equation*}
Putting this all together, we have the bound
\begin{equation}
\label{eqn: Telescope Bound 3}
    \left\lVert \mathcal{L}_{\psi-scr}^m(\chi_{I_{\underline{i}}}k)\right\rVert_{1,t}\leq
     C' \left\lvert e^{g^{(m)}-scr^{(m)}}\right\rvert_{C^0} \cdot \left\lVert k\right\rVert_{1,t}.
\end{equation}
Using the bounds (\ref{eqn: Telescope Bound 1}), (\ref{eqn: Telescope Bound 2}), and (\ref{eqn: Telescope Bound 3}) in (\ref{eqn: Telescoping sum trick}), we get the following bound 
\begin{equation*}
    \sum_{m=2}^n C \rho_0^{p\lfloor\log\left\lvert t\right\rvert\rfloor} e^{lP(\psi-\sigma cr)}\rho_0^m \cdot \left\lvert t\right\rvert \cdot \sum_{\left\lvert \underline{i}\right\rvert=m}\left\lvert e^{g^{(m)}-scr^{(m)}}\right\rvert_{C^0}\cdot  \left\lVert k\right\rVert_{1,t}
\end{equation*}
There exists a constant $C''>0$ such that
\begin{equation}
\label{eqn: Bound on sum over multiindices}
    \sum_{\left\lvert \underline{i}\right\rvert =m}\left\lvert e^{g^{(m)}-scr^{(m)}}\right\rvert _{C^0} \leq C'' e^{mP(\psi-\sigma cr)}\leq C''e^{nP(\psi-\sigma cr)},
\end{equation}
since for $\sigma<1$ we have $P(\psi-\sigma cr)>0$.

By the definitions of $p$ and $l$, $p\lfloor\log\left\lvert t\right\rvert\rfloor+m=n-l$, and $l<\log\left\lvert t\right\rvert$, we have

\begin{equation*}
    \sum_{m=2}^n C \rho_0^{n-l}\left\lvert t\right\rvert ^{P(\psi-\sigma cr)} \left\lvert t\right\rvert\sum_{\left\lvert \underline{i}\right\rvert=m}\left\lvert e^{g^{(m)}-scr^{(m)}}\right\rvert_{C^0} \left\lVert k\right\rVert_{1,t}\leq 
    Cn \left\lVert k\right\rVert_{1,t} \cdot \left(\rho_0 e^{P(\psi-\sigma cr)}\right)^n \left\lvert t\right\rvert^{1+P(\psi-\sigma cr)-\log\rho_0}.
\end{equation*}
Since the mapping $\sigma\mapsto P(\psi-\sigma cr)$ is continuous and decreasing, and since $P(\psi-~cr)=~0$, if we take $\sigma_1$ sufficiently close to $1$ with $\sigma_0<\sigma_1<1$, then $P(\psi-\sigma cr)\leq 1$ for every $\sigma_1\leq \sigma<1$. By taking $\sigma_1$ even closer to $1$ if necessary, we can assume that $\rho_0 e^{P(\psi-\sigma_1 cr)}<~1$. Letting $\rho_1:=~\rho_0 e^{P(\psi-\sigma_1 cr)}$, we get the desired conclusion of Lemma~\ref{lem: Key Bound for Eta} by taking $\varepsilon=~P(\psi-~\sigma_1cr)-~\log\rho_0$, which can be made arbitrarily close to $0$ by taking both $\sigma_1$ and $\rho_0$ sufficiently close to $1$.

\end{proof}
We can now use Lemma~\ref{lem: Key Bound for Eta} to bound $\eta_{\Psi,K}(s)$. We have
\begin{equation*}
    \begin{split}
        |Z_n(\psi-scr,k)| &\leq\left|Z_n(\psi-scr,k)-\sum_{i=1}^N\mathcal{L}_{\psi-scr}^n(\chi_i k)(x_i)\right|+\left|\sum_{i=1}^N\mathcal{L}_{\psi-scr}^n(\chi_i k)(x_i)\right| \\
        &\leq C_1||k||_{1,t}\left\lvert t\right\rvert^{1+\varepsilon}n\rho_1^n+\sum_{i=1}^N||\mathcal{L}_{\psi-scr}^n||_{1,t}||k||_{1,t} \\
        &\leq C_1||k||_{1,t}\left\lvert t\right\rvert^{1+\varepsilon}n\rho_1^n+C_2N||k||_{1,t}\rho_0^{p\lfloor\log\left\lvert t\right\rvert\rfloor}e^{lP(\psi-\sigma cr)},
    \end{split}
\end{equation*}
where $n=p\lfloor\log\left\lvert t\right\rvert\rfloor+l$. We now estimate
\begin{equation*}
    \begin{split}
        |\eta_{\Psi,K}(s)| &\leq \sum_{n=1}^\infty\frac{1}{n}|Z_n(\psi-scr,k)|\\
        &\leq C_1||k||_{1,t}\left\lvert t\right\rvert^{1+\varepsilon}\sum_{n=1}^\infty\rho_1^n+
     C_2N||k||_{1,t}\sum_{p=0}^\infty\rho_0^{p\lfloor\log\left\lvert t\right\rvert\rfloor} \sum_{l=0}^{\lfloor\log\left\lvert t\right\rvert\rfloor-1}e^{lP(\psi-\sigma cr)}\\
     &\leq C_1'||k||_{1,t}\left\lvert t\right\rvert^{1+\varepsilon}+C_2N||k||_{1,t}\left(\frac{1}{1-\left\lvert t\right\rvert^{-\log\rho_0}}\right)\left(\frac{e^{P(\psi-\sigma cr)\lfloor\log\left\lvert t\right\rvert\rfloor}-1}{e^{P(\psi-\sigma cr)}-1}\right).
    \end{split}
\end{equation*}

For sufficiently large $\left\lvert t\right\rvert$, the term $\left(\frac{1}{1-\left\lvert t\right\rvert^{-\log\rho_0}}\right)$ is uniformly bounded, and we have
\begin{equation}
\label{eqn: First Bound on Eta Growth}
     C_1'||k||_{1,t}\left\lvert t\right\rvert^{1+\varepsilon}+C_2'N||k||_{1,t}\left\lvert t\right\rvert^{P(\psi-\sigma cr)}\leq C_3||k||_{1,t}\left\lvert t\right\rvert^{1+\varepsilon}.
\end{equation}
Next, let $\delta>0$ be arbitrary. Then on the half line $\Re(s)=1+\delta$ we have the bound
\begin{equation*}
\begin{split}
    |\eta_{\Psi,K}(s)|&\leq \sum_{n=1}^\infty\frac{1}{n}\sum_{f^n(x)=x}|k^{(n)}(x)|e^{\psi^{(n)}(x)-(1+\delta)r^{(n)}(x)}\\
    &\leq||k||_{C^0}\sum_{n=1}^\infty e^{-n\delta(\inf r)}\sum_{f^n(x)=x}e^{\psi^{(n)}(x)-cr^{(n)}(x)}.
\end{split}
\end{equation*}
Since $\sum_{f^n(x)=x}e^{\psi^{(n)}-cr^{(n)}(x)}\leq Ce^{nP(\psi-cr)}=C$, we can bound the equation above by
\begin{equation}
\label{eqn: Bound For Eta to the right of entropy}
    C||k||_{C^0}\sum_{n=1}^\infty e^{-n\delta(\inf r)}\leq C_4||k||_{C^0}\leq C_4||k||_{1,t}.
\end{equation}
To conclude the proof of Theorem \ref{thm: Uniform extension of eta}, we will use the Phragm\'en-Lindel\"of theorem to interpolate between the bounds (\ref{eqn: First Bound on Eta Growth}) and (\ref{eqn: Bound For Eta to the right of entropy}). Namely, if we fix a $\sigma_2$ such that 
$\sigma_1<~\sigma_2<~1<1+\delta$, and let $\alpha(\sigma)$ be the linear function taking values $\alpha(\sigma_1)=1+\varepsilon$ and $\alpha(h+\delta)=0$, then the Phragm\'en-Lindel\"of theorem~\cite[\S2, p.150]{Iwaniec_Kowalski_2004} tells us that
\begin{equation}
    \label{eqn: Phragmen-Lindelof theorem Interpolation}
    |\eta_K(\sigma_2+it)|=O(||k||_{1,t}\left\lvert t\right\rvert^{\alpha(\sigma_2)}).
\end{equation}

By taking $\delta$ sufficiently close to $0$, we see that there exists such a $\sigma_2<1$ with $\alpha(\sigma_2)<1$. Taking this $\sigma:=\sigma_2$ and $\alpha:=\alpha(\sigma_2)$ finishes the proof of Theorem~\ref{thm: Uniform extension of eta}.

\begin{proof}[Proof of Theorem \ref{thm: Effective Equidistribution}]
We will now deduce effective equidistribution from Theorem \ref{thm: Uniform extension of eta}. Here, we change our normalization of $K$. Instead of assuming that $\int Kd\mu_{\Psi}=0$, we instead add a constant to $K$ so that $K\geq0$, noting that we can do this without changing our region or estimates in Theorem \ref{thm: Uniform extension of eta}.
Let
\begin{equation*}
    \Phi_{\Psi,K,0}(T):=\sum_{e^{hr^{(n)}(x)}\leq T}e^{\psi^{(n)}(x)}k^{(n)}(x),
\end{equation*}
and
\begin{equation*}
    \Phi_{\Psi,K,1}(T):=\int_1^T\Phi_{\Psi,K,0}(s)ds=\sum_{e^{hr^{(n)}(x)}\leq T}e^{\psi^{(n)}(x)}k^{(n)}(x)\left(T-e^{hr^{(n)}(x)}\right).
\end{equation*}
Notice that since $K\geq0$, $\Phi_{\Psi,K,0}(T)$ and $\Phi_{\Psi,K,1}(T)$ are increasing functions of $T$.
We will use the identity
\begin{equation*}
    \label{eqn: Inverse Mellin Transform}
    \frac{1}{2\pi i}\int_{d-i\infty}^{d+i\infty}\frac{y^{s+1}}{s(s+1)}ds=
     \begin{cases} 
      0 & 0<y\leq 1 \\
      y-1 & y>1, \\

\end{cases}
\end{equation*}
valid for any $d>1$, to relate $\Phi_{\Psi,K,1}$ and $\eta_{\Psi,K}$ as follows:
\begin{equation*}
    \Phi_{\Psi,K,1}(T)=\int_{d-i\infty}^{d+i\infty}\eta_{\Psi,K}(s)\frac{T^{s+1}}{s(s+1)}ds.
\end{equation*}
By Theorem \ref{thm: Uniform extension of eta}, $\eta_{\Psi,K}(s)$  is analytic in the half-plane $\Re(s)\geq 1-2\delta$ for some $\delta>0$, except for a simple pole at $s=1$ with residue
\begin{equation*}
    \text{Res}_{s=1}(\eta_{\Psi,K}(s))=\frac{1}{c}\int Kd\mu_\Psi.
\end{equation*}
Thus, by shifting the contour of integration, we have
\begin{equation*}
    \int_{d-i\infty}^{d+i\infty}\eta_{\Psi,K}(s)\frac{T^{s+1}}{s(s+1)}ds=
    \left(\frac{1}{c}\int Kd\mu_\Psi\right)\cdot\frac{T^2}{2}+\int_{(1-2\delta)-i\infty}^{(1-2\delta)+i\infty}\eta_{\Psi,K}(s)\frac{T^{s+1}}{s(s+1)}ds,
\end{equation*}
and along this domain of integration we have $\left\lvert \eta_{\Psi,K}(s)\right\rvert=O\left(\left\lVert K\right\rVert_{C^1} \left\lvert t\right\rvert^{\alpha}\right)$, for a fixed $0<\alpha<1$. Therefore,
\begin{equation*}
    \begin{split}
       \left|\int_{(1-2\delta)-i\infty}^{(1-2\delta)+i\infty}\eta_{\Psi,K}(s)\frac{T^{s+1}}{s(s+1)}ds\right|&\leq\int_{-\infty}^\infty|\eta_{\Psi,K}(\sigma+it)|\frac{T^{2-2\delta}}{|1-2\delta+it||2-2\delta+it|}dt\\
       &\leq C||K||_{C^1}T^{2-2\delta}\int_1^\infty\frac{t^\alpha}{t^2}dt\\
       &\leq C_5||K||_{C^1}T^{2-2\delta}.
    \end{split}
\end{equation*}
Hence we have the estimate
\begin{equation}\label{eqn: Preliminary Estimate on psi1}
    \Phi_{\Psi,K,1}(T)=\left(\frac{1}{c}\int Kd\mu_\Psi\right)\cdot\frac{T^2}{2}+O(\|K\|_{C^1}T^{2-2\delta})
\end{equation}
We would now like to pass from equation (\ref{eqn: Preliminary Estimate on psi1}) to a similar estimate on $\Phi_{\Psi,K,0}$. \\
Let $\Delta(T):=T^{1-\delta}$. Since $\Phi_{\Psi,K,0}$ is monotonically increasing, we have
\begin{equation*}
    \frac{\Phi_{\Psi,K,1}(T+\Delta(T))-\Phi_{\Psi,K,1}(T)}{\Delta(T)}=
    \frac{1}{\Delta(T)}\int_T^{T+\Delta(T)}\Phi_{\Psi,K,0}(s)ds\geq\Phi_{\Psi,K,0}(T).
\end{equation*}
By (\ref{eqn: Preliminary Estimate on psi1}), we have
\begin{equation*}
\begin{split}
   & \frac{\Phi_{\Psi,K,1}(T+\Delta(T))-\Phi_{\Psi,K,1}(T)}{\Delta(T)}\\  
   &\ = \frac{1}{2T^{1-\delta}}\left(\frac{1}{c}\int Kd\mu_\Psi\right)\left((T+T^{1-\delta})^2-T^2+
    O\left(||K||_{C^1}\left(T+T^{1-\delta})^{2-2\delta}\right)\right)-O\left(||K||_{C^1}T^{2-2\delta}\right)\right)\\
    &\ =\frac{1}{2T^{1-\delta}}\left(\frac{1}{c}\int Kd\mu_\Psi\right)\left(2T^{2-\delta}+T^{2-2\delta} \right)+O\left(||K||_{C^1}T^{1-\delta}\right)\\
    &\ =\left(\frac{1}{c}\int Kd\mu_\Psi\right)\cdot T+O\left(||K||_{C^1}T^{1-\delta}\right),
\end{split}
\end{equation*}
which gives us the upper bound $\left\lvert \Phi_{\Psi,K,0}(T)\right\rvert \leq \left(\frac{1}{c}\int Kd\mu_\Psi\right)\cdot T+ O\left(||K||_{C^1}T^{1-\delta}\right)$. Similarly we obtain the lower bound $\left\lvert \Phi_{\Psi,K,0}(T)\right\rvert \geq \left(\frac{1}{c}\int Kd\mu_\Psi\right)\cdot T+ O\left(||K||_{C^1}T^{1-\delta}\right)$ from the inequality
\begin{equation*}
     \frac{\Phi_{\Psi,K,1}(T)-\Phi_{\Psi,K,1}(T-\Delta(T))}{\Delta(T)}=
    \frac{1}{\Delta(T)}\int_{T-\Delta(T)}^{T}\Phi_{\Psi,K,0}(s)ds\leq\Phi_{\Psi,K,0}(T).
\end{equation*}
We therefore conclude that 
\begin{equation}
    \label{eqn: Estimate on psi0}
    \left\lvert \Phi_{\Psi,K,0}(T)\right\rvert=\left(\frac{1}{c}\int Kd\mu_\Psi\right)\cdot T+ O\left(||K||_{C^1}T^{1-\delta}\right).
\end{equation}
We are now ready to prove Theorem \ref{thm: Effective Equidistribution}. Let $K:M\rightarrow\mathbb{R}$ be $C^1$ such that $\int Kd\mu=0$. Then, by the definition of the measure $\mu_{T}$
\begin{equation*}
    \int Kd\mu_{\Psi,T}=\frac{1}{Z_T(g_t,\Psi)}\sum_{\tau\in\per((0,T])}e^{\int_\tau \Psi dt}\cdot\int_\tau Kdt.
\end{equation*}
If the orbit $\tau$ is coded by a point $x\in I$ with $f^n(x)=x$ and $r^{(n)}(x)=\per(\tau)$, and we have
\begin{equation*}
    \int_\tau Kdt=k^{(n)}(x),\hspace{1mm}\int_\tau \Psi dt=\psi^{(n)}(x),
\end{equation*}
where $k,\psi\in C^1(I)$ are as in Lemma \ref{lem: Lifting C1 functions to coding}. Technically, correspondence between periodic orbits of $g_t$ and periodic orbits of $f$ is not typically one-to-one, but it is close, in the sense that the amount of over counting we get in our coding is a subexponential amount. For simplicity, let us assume that the coding is perfect, so 
\begin{equation*}
    \begin{split}
        \int Kd\mu_{\Psi,T} &=\frac{1}{Z_T(g_t,\Psi)}\sum_{\tau\in\per((0,T])}e^{\int_\tau \Psi dt}\cdot\int_\tau Kdt\\ 
    &=\frac{1}{Z_T(g_t,\Psi)}\sum_{r^{(n)}(x)\leq T}e^{\psi^{(n)}(x)}k^{(n)}(x)\\
    &=\frac{1}{Z_T(g_t,\Psi)}\Phi_{\Psi,K,0}\left(e^{c T}\right),\\
    \end{split}
\end{equation*}
where we recall that $c:=P(\Psi)>0$.
Observe that 
\begin{equation*}
    Z_T(g_t,\Psi)=\sum_{\tau\in\per((0,T])}e^{\int_\tau \Psi dt}\cdot\ell(\tau)=\Phi_{\Psi,1,0}(e^{c T})=\frac{1}{c}\cdot e^{cT}+O(e^{(1-\delta)c T}).
\end{equation*}
Hence we have
\begin{equation}
    \int Kd\mu_\Psi^T=\frac{\Phi_{\Psi,K,0}\left(e^{c T}\right)}{\Phi_{\Psi,1,0}\left(e^{c T}\right)}=\frac{\frac{1}{c}\left(\int Kd\mu_\Psi\right)\cdot e^{cT}+O(\|K\|_{C^1}e^{(1-\delta)cT})}{\frac{1}{c}\cdot e^{cT}+O(e^{(1-\delta)c T})}=\int Kd\mu_\Psi+O(\|K\|_{C^1}e^{-\delta\cdot P(\Psi)\cdot T}),
\end{equation}
as desired. To account for the overcounting in the coding we instead write
\begin{equation*}
     \int Kd\mu_{\Psi,T}=\frac{1}{\Phi_{\Psi,1,0}(e^{c T})}\Phi_{\Psi,K,0}\left(e^{c\cdot T}\right)+\frac{1}{\Phi_{\Psi,1,0}(e^{c T})}\Phi_{\Psi,K,0}\left(e^{c\cdot T}\right)-\frac{1}{Z_T(g_t,\Psi)}\sum_{\tau\in\per((0,T])}e^{\int_\tau \Psi dt}\cdot\int_\tau Kdt.
\end{equation*}
The proof is finished if we can show that
\begin{equation*}
    \left|\frac{1}{\Phi_{\Psi,1,0}(e^{c T})}\Phi_{\Psi,K,0}\left(e^{c\cdot T}\right)-\frac{1}{Z_T(g_t,\Psi)}\sum_{\tau\in\per((0,T])}e^{\int_\tau \Psi dt}\cdot\int_\tau Kdt
     \right|=O\left(||K||_{C^1}e^{(c-\delta_0) T}\right),
\end{equation*}
for some $\delta_0>0$. 

We have 
\begin{equation}
\label{eqn: Overcounting Estimates}
\begin{split}
    & \left|\frac{1}{\Phi_{\Psi,1,0}(e^{c T})}\Phi_{\Psi,K,0}\left(e^{c\cdot T}\right)-\frac{1}{Z_T(g_t,\Psi)}\sum_{\tau\in\per((0,T])}e^{\int_\tau \Psi dt}\cdot\int_\tau Kdt  \right|\\
    &\ \leq\frac{1}{\Phi_{\Psi,1,0}(e^{c T})}\left|\Phi_{\Psi,K,0}\left(e^{c\cdot T}\right)-\sum_{\tau\in\per((0,T])}e^{\int_\tau \Psi dt}\cdot\int_\tau Kdt \right|\\
    &\ + \left|\sum_{\tau\in\per((0,T])}e^{\int_\tau \Psi dt}\cdot\int_\tau Kdt\right|\left|\frac{1}{\Phi_{\Psi,1,0}(e^{c T})}-\frac{1}{Z_T(g_t,\Psi)} \right| \\
    &\ \leq \frac{1}{\Phi_{\Psi,1,0}(e^{c T})}\left|\Phi_{\Psi,K,0}\left(e^{c\cdot T}\right)-\sum_{\tau\in\per((0,T])}e^{\int_\tau \Psi dt}\cdot\int_\tau Kdt \right|
    +\|K\|_{C^0}\frac{\left|\Phi_{\Psi,K,0}\left(e^{c\cdot T}\right)-Z_T(g_t,\Psi) \right|}{\Phi_{\Psi,K,0}\left(e^{c\cdot T}\right)}
\end{split}
\end{equation}
The two terms of \ref{eqn: Overcounting Estimates} are handled in exactly the same manner, so we will focus on estimating

\begin{equation*}
\begin{split}
    \left|\Phi_{\Psi,K,0}\left(e^{c\cdot T}\right)-\sum_{\tau\in\per((0,T])}e^{\int_\tau \Psi dt}\cdot\int_\tau Kdt \right|
    &=
    \left|\sum_{r^{(n)}(x)\leq T}e^{\int_\tau \Psi dt}\cdot\int_{\mathcal{O}(x)} Kdt- \sum_{\tau\in\per((0,T])}e^{\int_\tau \Psi dt}\cdot\int_\tau Kdt \right|\\
    &=\left|\sum_{\tau\in A_T}e^{\int_\tau \Psi dt}\cdot\int_\tau Kdt \right|\\
    &\leq ||K||_{C^0}T\sum_{\tau\in A_T}e^{\int_\tau \Psi dt},
\end{split}
\end{equation*}
where $A_T$ is the set of orbits of length at most $T$ in $(\Sigma_A^r,\sigma_t^r)$ which are ``redundant" for counting the orbits of $g_t$.

The following result of Bowen~\cite{Bowen_1973}, generalizing an earlier result of Manning~\cite{Manning} tells us how the weighted sums over $A_T$ grow:
\begin{prop}
\label{prop: Over counting Estimate}
    Let $c:=P(\Psi)>0$ be the pressure of the potential $\Psi:M\to \mathbb{R}$. Then there exists $\delta_0>0$ such that 
    \begin{equation*}
        \sum_{\tau\in A_T}e^{\int_\tau \Psi dt}=O\left(e^{(c-\delta_0)T}\right).
    \end{equation*}
\end{prop}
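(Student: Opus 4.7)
The plan is to show that the redundant orbits in $A_T$ are, up to a finite-to-one correction, periodic orbits that lie entirely on the boundary of the Markov partition, and then invoke the classical fact that this boundary supports a subsystem of strictly smaller topological pressure.

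First, I would unravel the source of the overcounting. A periodic orbit $\tau$ of $g_t$ can have more than one symbolic representative in $\Sigma_A^r$ precisely when some point of $\tau$ meets the boundary $\partial\mathcal{T}=\bigcup_i\partial\mathcal{T}_i$ of the chosen cross-sections. Decompose $\partial\mathcal{T}=\partial^s\mathcal{T}\cup\partial^u\mathcal{T}$ into its stable and unstable pieces. By the Markov property one has $P(\partial^s\mathcal{T})\subset\partial^s\mathcal{T}$ and $P^{-1}(\partial^u\mathcal{T})\subset\partial^u\mathcal{T}$; since a periodic orbit is invariant under all powers of $P$, a periodic orbit that ever hits $\partial\mathcal{T}$ must in fact be contained in $\partial\mathcal{T}$ for all time. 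Thus every orbit counted in $A_T$ is a periodic orbit of the flow whose entire trace on $\mathcal{T}$ lies in the closed invariant set $B:=\bigcap_{n\in\mathbb{Z}}P^{-n}(\partial\mathcal{T})$, and consequently the whole orbit lies in the closed $g_t$-invariant set $\Lambda:=\bigcup_{t\in[0,\sup r]}g_t(B)\subset M$.

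Next I would invoke Bowen's strict inequality~\cite{Bowen_1973} (in the form used by Manning): since $B$ consists of finitely many pieces of stable and unstable leaves glued along their boundaries and is dynamically of codimension at least one, its topological pressure is strictly smaller than that of the whole system,
\begin{equation*}
    P_{\Lambda}(\Psi)\;<\;P(\Psi)=c.
\end{equation*}
With $\delta_0':=c-P_\Lambda(\Psi)>0$, the standard weighted prime orbit estimate applied to the restriction $g_t|_\Lambda$ (which is still hyperbolic and satisfies specification on each transitive component) yields
\begin{equation*}
    \sum_{\substack{\tau\subset\Lambda\\ \ell(\tau)\leq T}} e^{\int_\tau\Psi\,dt}
    \;=\;O\bigl(e^{P_\Lambda(\Psi)\,T}\bigr)\;=\;O\bigl(e^{(c-\delta_0')T}\bigr).
\end{equation*}
Since $A_T$ is contained (with uniformly bounded multiplicity coming from the finite maximum number of symbolic preimages a boundary point can have) in the set of such orbits, summing over $A_T$ gives the same bound up to a constant, and taking $\delta_0<\delta_0'$ completes the estimate.

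The main obstacle is the second step: one must be sure that the restriction of the flow to $\Lambda$ is regular enough that Bowen's inequality $P_\Lambda(\Psi)<P(\Psi)$ applies, and that a clean weighted prime orbit asymptotic is available on each transitive component. If $\Lambda$ fails to be transitive, one decomposes it into its basic sets and applies the argument componentwise, taking $\delta_0$ to be the minimum of the resulting gaps. The multiplicity of the coding on $B$ is bounded a priori by the combinatorics of the Markov partition (each boundary point has at most a fixed finite number of itineraries), so no subexponential factors are lost in passing from symbolic sums over $A_T$ to orbit sums over $\Lambda$.
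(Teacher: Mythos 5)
Your proposal is correct and is essentially the paper's argument: both identify the redundant orbits in $A_T$ with periodic orbits trapped in the forward-/backward-invariant stable and unstable boundaries of the Markov partition, and then invoke the Bowen--Manning strict pressure inequality for the resulting boundary subsystem (for the weighted case you should cite Parry--Pollicott, Lemma 9.3, rather than the unweighted entropy statements of Bowen and Manning, since the strict drop in \emph{pressure} for a general potential $\Psi$ is exactly what that lemma supplies). The only difference is the final counting step: the paper factors the flow zeta function through the auxiliary boundary subshifts and applies Perron's formula, whereas you bound the weighted orbit sum over $\Lambda$ directly, which is legitimate because only an upper bound of order $e^{P_\Lambda(\Psi)T}$ is needed (not an asymptotic, so your transitivity worry is moot) and your observation that the coding map is uniformly finite-to-one correctly controls the multiplicity in passing from symbolic orbits in $A_T$ to flow orbits in $\Lambda$.
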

The over-counted orbits comprising $A_T$ are precisely the periodic orbits that lie on the boundary of the Markov partition elements. 
The standard Bowen-Manning approach is then to capture this over-counting on the boundary by constructing finitely many auxiliary subshifts $f_i:I\to I$ determined explicitly by the given Markov partition. Moreover, the pressure of $\Psi$ restricted to each these subshift satisfies $P(\Psi,f_i)<P(\Psi)$ (see Parry-Pollicott \cite{Parry_Pollicott_1990} Lemma 9.3 for a precise statement in the weighted case). In particular, one derives a precise factorization of the flow zeta function as the subshift Zeta function times a rational expression of the Zeta functions of the auxiliary subshfits. Proposition \ref{prop: Over counting Estimate} then follows from standard arguments, using Perron's formula (c.f. \cite[Theorem~5.6]{Iwaniec_Kowalski_2004}).

Thus
\begin{equation*}
\begin{split}
     \left|\Phi_{\Psi,K,0}\left(e^{c\cdot T}\right)-\sum_{\tau\in\per((0,T])}e^{\int_\tau \Psi dt}\int_\tau Kdt
     \right| &= O\left(||K||_{C^0}T\sum_{\tau\in A_T}e^{\int_\tau \Psi dt}\right)\\
     &=O\left(||K||_{C^0}Te^{(c-\delta_0)T}\right),
\end{split}
\end{equation*}
for any $0<\delta_1<\delta_0$, which finishes the proof.
\end{proof}

In Theorem \ref{thm: Effective Equidistribution} we considered the rate of equidistribution of all periodic orbits of length at most $T$. However, in some cases it is of interest to take some $\varepsilon>0$ and consider those periodic orbits with $T-\varepsilon< \ell(\tau)\leq T$. Define
\begin{equation}
\label{eqn: Weighted discrete measures in a window}
    \mu_\Psi^{(t-\varepsilon,T]}:=\frac{1}{Z_{(T-\varepsilon,T]}(g_t,\Psi)}\sum_{\tau\in\per((T-\varepsilon,T])}e^{\int_\tau \Psi }\delta_\tau
\end{equation}
Observing that 
\begin{equation*}
    \sum_{T-\varepsilon<r^{(n)}(x)\leq T}e^{\psi^{(n)}(x)}k^{(n)}(x)=\Phi_{\Psi,K,0}(e^{cT})-\Phi_{\Psi,K,0}(e^{c(T-\varepsilon)}),
\end{equation*}
and likewise 
\begin{equation*}
    Z_{(T-\varepsilon,T]}(g_t,\Psi)=\Phi_{1,K,0}(e^{cT})-\Phi_{1,K,0}(e^{c(T-\varepsilon)}),
\end{equation*}
We have the following immediate corollary:
\begin{cor}
    Let $g_t:M\rightarrow M$ be a contact Anosov flow on a three-dimensional manifold $M$, and let $\Psi\in C^1(M)$ be a potential with equilibrium state $\mu_\Psi$ and $P(\Psi)>0$, and let $ \mu_\Psi^{(t-\varepsilon,T]}$ be as in (\ref{eqn: Weighted discrete measures in a window}). Then there exist constants $C,\delta>0$ such that for any $C^1$ function $K:M\rightarrow\mathbb{R}$ we have
    \begin{equation}
    \label{eqn: Effective Equidistribution exp}
        \left|\int K d\mu_\Psi-\int K d \mu_\Psi^{(t-\varepsilon,T]} \right|\leq
        C||K||_{C^1}e^{-\delta\cdot P(\Psi)\cdot T}.
    \end{equation}
\end{cor}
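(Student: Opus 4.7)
The plan is to observe that this corollary follows from the refined counting estimate \eqref{eqn: Estimate on psi0} already established in the proof of Theorem~\ref{thm: Effective Equidistribution}, without requiring any new analytic input. The two identities recorded immediately before the statement,
\begin{equation*}
\sum_{T-\varepsilon<r^{(n)}(x)\leq T}e^{\psi^{(n)}(x)}k^{(n)}(x) = \Phi_{\Psi,K,0}(e^{cT}) - \Phi_{\Psi,K,0}(e^{c(T-\varepsilon)}),
\end{equation*}
and the analogous one for the normalization, reduce everything to an estimate on \emph{differences} of the counting function $\Phi_{\Psi,K,0}$.

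First, I would apply \eqref{eqn: Estimate on psi0} at the two endpoints $e^{cT}$ and $e^{c(T-\varepsilon)}$ and subtract, obtaining
\begin{equation*}
\Phi_{\Psi,K,0}(e^{cT}) - \Phi_{\Psi,K,0}(e^{c(T-\varepsilon)}) = \frac{1-e^{-c\varepsilon}}{c}\left(\int K\,d\mu_\Psi\right)e^{cT} + O\!\left(\|K\|_{C^1}\,e^{c(1-\delta)T}\right),
\end{equation*}
together with the analogous expression for $\Phi_{\Psi,1,0}$ (where the main-term coefficient is simply $(1-e^{-c\varepsilon})/c$). Since $\varepsilon>0$ is fixed, the prefactor $1-e^{-c\varepsilon}$ is a positive constant, so the main terms in numerator and denominator are genuinely of order $e^{cT}$, while the error terms remain $O(e^{c(1-\delta)T})$. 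Writing $\int K\,d\mu_\Psi^{(T-\varepsilon,T]}$ as the ratio of these two differences and dividing, exactly as in the computation at the end of the proof of Theorem~\ref{thm: Effective Equidistribution}, yields
\begin{equation*}
\int K\,d\mu_\Psi^{(T-\varepsilon,T]} - \int K\,d\mu_\Psi = O\!\left(\frac{\|K\|_{C^1}\,e^{-c\delta T}}{1-e^{-c\varepsilon}}\right).
\end{equation*}

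Finally, the over-counting from the coding --- the redundant orbits lying on boundaries of Markov partition elements --- is handled verbatim as in the main theorem via Proposition~\ref{prop: Over counting Estimate}: the set of redundant orbits contained in the window $(T-\varepsilon,T]$ is a subset of $A_T$, so the weighted sum over it is still $O(e^{(c-\delta_0)T})$, which is exponentially smaller than the $e^{cT}$ main term and is absorbed into the error. There is no real obstacle; the only point of bookkeeping is that the implicit constant in the final bound depends on $\varepsilon$ through the factor $(1-e^{-c\varepsilon})^{-1}$, so the argument is meaningful as long as $\varepsilon$ stays bounded away from $0$ as $T\to\infty$.
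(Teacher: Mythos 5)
Your proposal is correct and follows exactly the route the paper intends: the corollary is stated as an immediate consequence of the two difference identities for $\Phi_{\Psi,K,0}$ together with the estimate \eqref{eqn: Estimate on psi0}, and your subtraction, division, and treatment of the over-counting via Proposition~\ref{prop: Over counting Estimate} fill in precisely those steps. Your closing remark that the implied constant degrades like $(1-e^{-c\varepsilon})^{-1}$ as $\varepsilon\to 0$ is a correct and worthwhile observation that the paper leaves implicit.
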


\section{General Anosov Flows}\label{sec:effective-general-Anosov}
We now address the problem of extending Theorem \ref{thm: Effective Equidistribution} from  from contact Anosov flows on three-dimensional manifolds to more general Anosov flows. In fact, we only used that fact that we were working with a contact Anosov flows on a three-dimensional manifold in order to apply Proposition~\ref{prop: Dolgopyat Estimate}, which in turn allowed to show that our function $\eta_{\Psi,K}$ has an analytic extension to a half-line $\Re(s)\geq 1-\delta$ for some $\delta>0$, except for the simple pole at $s=1$ (see Theorem \ref{thm: Uniform extension of eta}). Our proof extends to any Anosov flows for which we have the estimate (\ref{eqn: Dolgopyat Estimate}), and hence can prove the corresponding version of Theorem \ref{thm: Uniform extension of eta}. It is an open problem to determine if the zeta function (even just the standard one-variable zeta function) for a weak-mixing Anosov flow has an analytic extension to a strictly larger half-plane.

In this section, following the approach of Pollicott and Sharp~\cite{Pollicott_Sharp_2001}, we will prove Theorem \ref{thm: Effective Equidistribution - general Anosov}, showing that for a transitive weak-mixing Anosov flow, we have a polynomial rate of equidistribution.

The proof proceeds along similar lines to that of Theorem \ref{thm: Effective Equidistribution}; however, since we do not in general have a ``$C^1$-coding'' as we do in the contact case, we will rely on the standard Bowen-Ratner coding given by Proposition \ref{prop: Bowen Ratner Coding}, and all of our functions will lift to $\alpha_0$-H\"older functions on the coding space, as explained in \S\ref{subsec:lifting-functions}.

For simplicity, we shall assume that $\Psi=0$ so that $\mu_\Psi=\mu$ is the measure of maximal entropy, with $P(0)=h>0$. Suppose that $K:M\to\mathbb{R}$ is a Lipschitz function. Consider the function $\eta_{K}$ defined over the shift $\sigma:\Sigma_A\to\Sigma_A$ given by
\begin{equation*}
    \eta_{K}(s)=\sum_{n=1}^\infty\frac{1}{n}\sum_{\sigma^n(x)=x}k^{(n)}(x)e^{-shr^{(n)}(x)}.
\end{equation*}
By Proposition \ref{prop: Initial Domain of eta}, $\eta_K$ is analytic in the half-plane $\Re(s)\geq 1$, except for a simple pole at $s=1$ with residue $\frac{1}{h}\int Kd\mu$. The next step is to extend the domain of analyticity to a larger region which is independent of $K$. Unfortunately, we will not be able to extend to a strictly larger half plane because we do not have Dolgopyat's estimate~\ref{eqn: Dolgopyat Estimate} at our disposal. Instead, we use in the following weaker estimate that appears in \cite[Proposition~$2$]{Pollicott_Sharp_2001} for the operator norm of $\mathcal{L}_{-shr}$ acting on the space of H\"older functions:
\begin{prop}[Pollicott-Sharp estimate]
    \label{prop: Weaker Dolgopyat estimate}
    There exist constants $t_0\geq 1$, $\tau>0$, $C_1>0$ and $C>0$ such that if $s=\sigma+it$ with $\left\lvert t\right\rvert\geq t_0$ and every $m\in\mathbb{N}$ we have
    \begin{equation*}
        \label{eqn: Weaker Dolgopyat Estimate}
        \left\lVert \mathcal{L}_{-shr}^{2m\lfloor C\log\left\lvert t\right\rvert\rfloor}\right\rVert \leq C_1 \left\lvert t\right\rvert e^{2m\lfloor C\log\left\lvert t\right\rvert \rfloor P(-\sigma r)}\left(1-\frac{1}{\left\lvert t\right\rvert^{\tau}}\right)^{m-1}.
    \end{equation*}
\end{prop}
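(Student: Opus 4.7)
The plan is to adapt Dolgopyat's $L^2$-contraction machinery in the weaker form developed by Pollicott--Sharp~\cite{Pollicott_Sharp_2001} for transitive weak-mixing Anosov flows, in which the gain per block of $\approx C\log|t|$ iterates is only polynomial in $|t|$ rather than uniform. By the Ruelle--Perron--Frobenius theorem, the real operator $\mathcal{L}_{-\sigma h r}$ has a simple maximal eigenvalue $\lambda_\sigma = e^{P(-\sigma h r)}$ with a positive H\"older eigenfunction $h_\sigma$ and dual eigen-measure $\nu_\sigma$; conjugating by $h_\sigma$ and dividing by $\lambda_\sigma$ produces a normalized twisted operator $\widetilde{\mathcal{L}}_s$ satisfying $\widetilde{\mathcal{L}}_\sigma 1 = 1$, so that H\"older-norm bounds on $\widetilde{\mathcal{L}}_s^N$ translate, up to a bounded multiplicative constant, into operator-norm bounds on $\mathcal{L}_{-shr}^N$ multiplied by the pressure factor $e^{N P(-\sigma h r)}$ appearing in the conclusion.

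The heart of the proof is a base $L^2$-contraction: there exist $C, \tau > 0$ and $t_0 \geq 1$ such that for $|t| \geq t_0$ and every $w$ in a suitable Dolgopyat invariant cone of H\"older functions (roughly, those whose H\"older semi-norm is dominated by $|t|$ times the sup-norm),
\begin{equation*}
    \int \bigl| \widetilde{\mathcal{L}}_s^{2\lfloor C\log|t|\rfloor} w \bigr|^2\, d\nu_\sigma \;\leq\; \left(1 - \frac{1}{|t|^\tau}\right) \int |w|^2\, d\nu_\sigma.
\end{equation*}
The argument exploits oscillation in the phases $e^{-i t h r^{(n)}}$ attached to inverse branches: by weak mixing the roof $r$ is not H\"older-cohomologous to a locally constant function (Parry--Pollicott~\cite{Parry_Pollicott_1990}), hence satisfies a uniform non-integrability (UNI) property, producing two inverse branches $f_{w_1}^{-n}, f_{w_2}^{-n}$ on which the derivative of $r^{(n)}\circ f_{w_1}^{-n} - r^{(n)}\circ f_{w_2}^{-n}$ is bounded away from zero. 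Combined with a partition of unity at scale $\sim 1/|t|$ and an $L^2$ square-expansion, this yields destructive interference and the polynomial gain $|t|^{-\tau}$.

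Since the cone is invariant under $\widetilde{\mathcal{L}}_s^{2\lfloor C\log|t|\rfloor}$, iterating the base estimate $m-1$ times gives $\|\widetilde{\mathcal{L}}_s^{2m\lfloor C\log|t|\rfloor} w\|_{L^2(\nu_\sigma)}^2 \leq (1 - |t|^{-\tau})^{m-1}\,\|w\|_{\text{cone}}^2$. A Lasota--Yorke inequality provides an a priori H\"older bound for $\widetilde{\mathcal{L}}_s^N w$, and interpolating it with the $L^2$-gain via a Sobolev-type embedding at scale $1/|t|$ upgrades the $L^2$-bound to a sup-norm and then a H\"older-norm bound at the cost of a single factor of $|t|$; unwinding the normalization then produces the claimed inequality.

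The technical crux, and the place I expect the main obstacle, is the base $L^2$-contraction above. Without the $C^1$ foliation regularity used in Proposition~\ref{prop: Dolgopyat Estimate}, the UNI input is only H\"older, forcing one to work at the coarser oscillatory scale $|t|^{-\tau}$, and constructing Dolgopyat cones that both tolerate this scale and are preserved under $\widetilde{\mathcal{L}}_s^{\lfloor C\log|t|\rfloor}$ is the delicate content of Pollicott--Sharp's approach.
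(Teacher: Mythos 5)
The paper does not prove this proposition at all: it is quoted verbatim as Proposition~2 of Pollicott--Sharp \cite{Pollicott_Sharp_2001}, so there is no in-paper argument to compare yours against. Judged on its own terms, your sketch does reproduce the correct architecture of the Dolgopyat/Pollicott--Sharp machinery behind that citation: RPF normalization so that the pressure factor $e^{NP(-\sigma r)}$ splits off, Dolgopyat cones of H\"older functions with semi-norm controlled by $|t|$ times the sup-norm, an $L^2(\nu_\sigma)$ contraction over blocks of length $\asymp \log|t|$ with gain $1-|t|^{-\tau}$, iteration via cone invariance, and a Lasota--Yorke plus interpolation step to convert the $L^2$ gain back into an operator-norm bound at the cost of the single factor $|t|$ in the statement. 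That is faithful to the source.

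There are, however, two genuine problems. First, your justification of the base contraction misattributes the mechanism: weak mixing does \emph{not} imply uniform non-integrability (UNI). UNI is the strictly stronger hypothesis underlying the \emph{uniform} Dolgopyat estimate of Proposition~\ref{prop: Dolgopyat Estimate} (the contact, $C^1$-foliation case). For a general transitive weak-mixing Anosov flow one only knows that the roof function is not cohomologous to one valued in a discrete subgroup, and the gain $|t|^{-\tau}$ in Pollicott--Sharp is extracted from a quantitative (compactness/Diophantine-type) consequence of this non-lattice condition, not from a H\"older-regularity-degraded UNI. As written, your claimed implication ``weak mixing $\Rightarrow$ UNI'' is false, and if it were true one would expect the uniform estimate rather than the polynomial one you are trying to prove. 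Second, the base $L^2$-contraction --- which you yourself identify as the crux --- is asserted rather than established; since everything else in the outline is routine given that estimate, the proposal as it stands is a correct roadmap to the literature rather than a proof. Given that the paper simply cites \cite[Proposition~2]{Pollicott_Sharp_2001}, the honest fix is either to do likewise or to carry out the contraction estimate in full with the correct non-lattice input.
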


If $n=2m\lfloor C\log\left\lvert t\right\rvert\rfloor+\ell$, with $0\leq\ell<2\lfloor C\log\left\lvert t\right\rvert\rfloor-1$, then we immediately get the bound
\begin{equation}
\label{eqn: General Weak Dolgopyat Estimate}
    \left\lVert \mathcal{L}^n_{-shr}\right\rVert \leq \left\lVert \mathcal{L}_{-shr}^{2m\lfloor C\log\left\lvert t\right\rvert\rfloor}\right\rVert \cdot \left\lVert \mathcal{L}_{-shr}^{\ell}\right\rVert \leq C_2\left\lvert t\right\rvert^{2} e^{nP(-\sigma r)}\left(1-\frac{1}{\left\lvert t\right\rvert^{\tau}}\right)^{m-1},
\end{equation}
for some $C_2>0$.
Letting 
\begin{equation*}
    Z_n(-shr,k):=\sum_{\sigma^n(x)=x}k^{(n)}(x)e^{-shr^{(n)}(x)},
\end{equation*}
we have as before
\begin{equation*}
    \eta_K(s)=\sum_{n=1}^\infty\frac{1}{n}Z_n(-shr,k).
\end{equation*}
We then have the following weaker version of Lemma~\ref{lem: Key Bound for Eta}:
\begin{lemma}[Ruelle-type estimate]
    \label{lem: Key Bound for Eta Weaker Version}
    There exist constants $C_3>0$ and $t_1\geq t_0$ such that for any points $x_i\in I_i$, all $n\in\mathbb{N}$, and all $\left\lvert t\right\rvert\geq t_1$, we have
    For  $\rho_0$ such that $\max\{\rho,\gamma\}<\rho_0<1$, there exist $C_{1},\epsilon$ and $0<\rho_1<1$ such that for  and $\left\lvert t\right\rvert\geq 1$ we have

    \begin{equation}
        \left\lvert Z_n(-shr,k)-\sum_{i=1}^N\mathcal{L}_{-shr}^n(\chi_{I_{i}}k)(x_i) \right\rvert \leq C_3 \left\lVert k\right\rVert_{\alpha_0}\left\lvert t\right\rvert^{3} e^{nP(-\sigma hr)}\left(1-\frac{1}{\left\lvert t\right\rvert^{\tau}}\right)^{\lfloor\frac{n}{2\lfloor C\log\left\lvert t\right\rvert\rfloor}\rfloor}.
        \end{equation}
\end{lemma}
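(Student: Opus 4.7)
The plan is to mirror the proof of Lemma~\ref{lem: Key Bound for Eta} step by step, with two essential substitutions: we work in the H\"older norm $\|\cdot\|_{\alpha_0}$ in place of $\|\cdot\|_{1,t}$ (since the general-Anosov coding is only H\"older), and we invoke the weaker Pollicott-Sharp bound \eqref{eqn: General Weak Dolgopyat Estimate} in place of Dolgopyat's strong estimate.

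First I would reproduce the admissible-string decomposition and telescoping identity, yielding
\begin{equation*}
Z_n(-shr,k)-\sum_{i=1}^N\mathcal{L}_{-shr}^n(\chi_{I_i}k)(x_i)=\sum_{m=2}^{n}\sum_{|\underline{i}|=m}\Bigl(\mathcal{L}_{-shr}^n(\chi_{I_{\underline{i}}}k)(x_{\underline{i}})-\mathcal{L}_{-shr}^n(\chi_{I_{\underline{i}}}k)(x_{\underline{j}(\underline{i})})\Bigr).
\end{equation*}
Since $x_{\underline{i}}$ and $x_{\underline{j}(\underline{i})}$ lie in a common inverse branch of $f^m$, their distance is at most $\gamma^m$, so each inner difference is bounded by the $C^{\alpha_0}$-seminorm of $\mathcal{L}_{-shr}^n(\chi_{I_{\underline{i}}}k)$ times $\gamma^{\alpha_0 m}$. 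Writing $\mathcal{L}_{-shr}^n=\mathcal{L}_{-shr}^{n-m}\circ\mathcal{L}_{-shr}^{m}$ and using submultiplicativity of the norm bounds this by $\|\mathcal{L}_{-shr}^{n-m}\|_{\alpha_0}\cdot\|\mathcal{L}_{-shr}^m(\chi_{I_{\underline{i}}}k)\|_{\alpha_0}\cdot\gamma^{\alpha_0 m}$.

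Next I would estimate the two operator factors. The bound \eqref{eqn: General Weak Dolgopyat Estimate} contributes $|t|^{2}\cdot e^{(n-m)P(-\sigma hr)}\cdot(1-|t|^{-\tau})^{m'-1}$ where $n-m=2m'\lfloor C\log|t|\rfloor+\ell$. For the second factor, the product and chain rules applied to $e^{-shr^{(m)}\circ f^{-m}_{\underline{i}}}(k\circ f^{-m}_{\underline{i}})$, together with $|f^{-m}_{\underline{i}}|_{C^1}\leq\gamma^m$ and the telescoping identity $r^{(m)}\circ f^{-m}_{\underline{i}}=\sum_{j=0}^{m-1}r\circ(f^j\circ f^{-m}_{\underline{i}})$, produce exactly one new factor of $|s|\asymp|t|$, yielding
\begin{equation*}
\|\mathcal{L}_{-shr}^m(\chi_{I_{\underline{i}}}k)\|_{\alpha_0}\ll |t|\cdot\|k\|_{\alpha_0}\cdot\bigl|e^{-shr^{(m)}}\bigr|_{C^0}.
\end{equation*}
The standard partition-function estimate $\sum_{|\underline{i}|=m}|e^{-shr^{(m)}}|_{C^0}\ll e^{mP(-\sigma hr)}$ then consolidates the pressure factors into $e^{nP(-\sigma hr)}$, the three $|t|$-powers combine to the desired $|t|^{3}$, and the geometric tail $\sum_{m}\gamma^{\alpha_0 m}$ converges absolutely, absorbing all polynomial-in-$m$ factors into the constant $C_3$.

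The main obstacle will be bookkeeping the block-contraction factor uniformly in $m$: since \eqref{eqn: General Weak Dolgopyat Estimate} provides decay only once per block of length $2\lfloor C\log|t|\rfloor$, I would split the $m$-sum into a small-$m$ regime, where $m'-1$ agrees with $\lfloor n/(2\lfloor C\log|t|\rfloor)\rfloor$ up to a bounded additive error (handled by $(1-|t|^{-\tau})^{-O(1)}=O(1)$ for $|t|$ large), and a large-$m$ regime, where the block factor may only be trivially $\leq 1$ but $\gamma^{\alpha_0 m}$ decays much faster than the target $(1-|t|^{-\tau})^{\lfloor n/(2\lfloor C\log|t|\rfloor)\rfloor}$ since $\alpha_0\log\gamma^{-1}\gg|t|^{-\tau}/\log|t|$ for fixed $|t|$. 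This is the precise analogue of the role played by the $\rho_1^n$ contraction in Lemma~\ref{lem: Key Bound for Eta}, except that here only a logarithmic fraction of the iterates actually produces decay.
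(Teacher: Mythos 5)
Your proposal is correct and follows exactly the route the paper intends: the paper omits this proof, stating that it is the same as Lemma 3 of Pollicott--Sharp with the dependence on $k$ tracked as in Lemma~\ref{lem: Key Bound for Eta}, and your telescoping argument with H\"older seminorms, the bound \eqref{eqn: General Weak Dolgopyat Estimate}, and the partition-function estimate is precisely that adaptation. You also correctly isolate the one genuinely new bookkeeping point --- reconciling the per-block contraction exponent $m'-1$ for $\mathcal{L}^{n-m}$ with the target exponent $\lfloor n/(2\lfloor C\log\lvert t\rvert\rfloor)\rfloor$ --- and resolve it the right way, via $\gamma^{\alpha_0 m}e^{m\lvert t\rvert^{-\tau}/(2\lfloor C\log\lvert t\rvert\rfloor)}<1$ for $\lvert t\rvert\geq t_1$.
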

We omit the proof as it is the same as that of Lemma 3 in \cite{Pollicott_Sharp_2001}, with additional step of tracking dependency on the test function $k$ just as we did in the proof of Lemma \ref{lem: Key Bound for Eta}. Using Lemma~\ref{lem: Key Bound for Eta Weaker Version}, we get the following analytic extension \cite[Corollary~$4.1$]{Pollicott_Sharp_2001}:
\begin{theorem}
    \label{thm: Weaker Analytic Extension of Eta}
    There exist constants $t_2\geq t_1$ and $\rho>0$ such that $\eta_K(s)$ has an analytic extension to the set
    \begin{equation*}
        \mathcal{R}(\rho):=\left\{s=\sigma+it\in\mathbb{C} \hspace{1mm}\bigg|\hspace{1mm} \sigma>\frac{1}{2h^{\rho+1}\left\lvert t\right\rvert^{\rho}},\left\lvert t\right\rvert\geq t_2 \right\}
    \end{equation*}
    Moreover, on $\mathcal{R}(\rho)$ we have the estimate
    $|\eta_K(s)|\leq C_4||K||_{Lip}\left\lvert t\right\rvert^{3(\rho+1)}$ for some $C_4>0$.
\end{theorem}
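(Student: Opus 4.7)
The plan is to estimate $\left\lvert \eta_K(s)\right\rvert$ directly from its series representation $\eta_K(s)=\sum_{n\ge 1}\tfrac{1}{n}Z_n(-shr,k)$, bounding each $Z_n$ by combining the Ruelle-type error estimate (Lemma~\ref{lem: Key Bound for Eta Weaker Version}) with the iterated spectral bound~(\ref{eqn: General Weak Dolgopyat Estimate}). First I would split
\begin{equation*}
\left\lvert Z_n(-shr,k)\right\rvert \le \left\lvert Z_n(-shr,k) - \sum_{i=1}^N \mathcal{L}_{-shr}^n(\chi_{I_i} k)(x_i)\right\rvert + \sum_{i=1}^N \left\lVert \mathcal{L}_{-shr}^n\right\rVert \cdot \left\lVert \chi_{I_i} k\right\rVert_{C^{\alpha_0}},
\end{equation*}
and apply Lemma~\ref{lem: Key Bound for Eta Weaker Version} to the first term and~(\ref{eqn: General Weak Dolgopyat Estimate}) to the second. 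Since $\left\lVert \chi_{I_i} k\right\rVert_{C^{\alpha_0}} \ll \left\lVert k\right\rVert_{C^{\alpha_0}}$, and the H\"older lifting lemma converts $\left\lVert k\right\rVert_{C^{\alpha_0}} \ll \left\lVert K\right\rVert_{\mathrm{Lip}}$, both contributions fit into a common bound of the shape
\begin{equation*}
\left\lvert Z_n(-shr,k)\right\rvert \le C \left\lVert K\right\rVert_{\mathrm{Lip}} \left\lvert t\right\rvert^3 e^{n P(-\sigma h r)} \left(1 - \left\lvert t\right\rvert^{-\tau}\right)^{\lfloor n/M_t \rfloor},
\end{equation*}
where $M_t := 2\lfloor C \log \left\lvert t\right\rvert\rfloor$.

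Next I would sum the series. Grouping the indices in consecutive blocks of length $M_t$, the tail is essentially geometric in the ratio $q(s,t) := e^{M_t P(-\sigma h r)} \left(1 - \left\lvert t\right\rvert^{-\tau}\right)$, and absolute convergence holds precisely when $q(s,t)<1$. Using $\log(1-x) \le -x$, this reduces to $P(-\sigma h r) \lesssim \left\lvert t\right\rvert^{-\tau}/\log\left\lvert t\right\rvert$. Because $\sigma \mapsto P(-\sigma h r)$ is continuous, strictly decreasing, vanishes at $\sigma = 1$, and has nonzero derivative there, this inequality is satisfied on a thin strip abutting $\Re(s) = 1$ whose width decays polynomially in $\left\lvert t\right\rvert^{-1}$. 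Choosing $\rho$ in terms of $\tau$ and this derivative so that $\mathcal{R}(\rho)$ lies strictly inside the convergence strip for all $\left\lvert t\right\rvert \ge t_2$ then yields the required analytic extension.

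For the growth estimate, on $\mathcal{R}(\rho)$ the geometric series sums to $M_t/(1 - q(s,t))$, and the calibration guarantees $1 - q(s,t) \gtrsim \left\lvert t\right\rvert^{-\tau}$. Thus
\begin{equation*}
\left\lvert \eta_K(s)\right\rvert \ll \left\lVert K\right\rVert_{\mathrm{Lip}} \left\lvert t\right\rvert^3 \cdot M_t \cdot \left\lvert t\right\rvert^{\tau} \ll \left\lVert K\right\rVert_{\mathrm{Lip}} \left\lvert t\right\rvert^{3(\rho+1)},
\end{equation*}
once $\rho$ is taken large enough to swallow both $\tau$ and the logarithmic factor $M_t$. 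The main obstacle is precisely this joint calibration: $\rho$ must be chosen large enough that $\mathcal{R}(\rho)$ embeds with sufficient safety margin inside the convergence strip (so that $1-q$ enjoys a good polynomial lower bound), yet small enough that the polynomial exponent $3(\rho+1)$ in the final estimate matches the stated one. Given the pointwise estimate already encoded in Lemma~\ref{lem: Key Bound for Eta Weaker Version}, the remainder is bookkeeping in the spirit of Pollicott-Sharp~\cite[Corollary~4.1]{Pollicott_Sharp_2001}.
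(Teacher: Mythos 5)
Your proposal is correct and follows essentially the same route as the paper, which proves this theorem by deferring to Pollicott--Sharp \cite[Corollary~4.1]{Pollicott_Sharp_2001}: bound $Z_n(-shr,k)$ by combining Lemma~\ref{lem: Key Bound for Eta Weaker Version} with the iterated estimate \eqref{eqn: General Weak Dolgopyat Estimate}, sum the resulting almost-geometric series in blocks of length $2\lfloor C\log|t|\rfloor$, and calibrate $\rho$ against $\tau$ so that the convergence condition $P(-\sigma hr)\lesssim |t|^{-\tau}/\log|t|$ holds on the region and $1-q\gtrsim|t|^{-\tau}$, yielding $|t|^{3+\tau}\log|t|\ll|t|^{3(\rho+1)}$. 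Note only that the defining inequality of $\mathcal{R}(\rho)$ should be read as $\sigma>1-\frac{1}{2h^{\rho+1}|t|^{\rho}}$, as you implicitly and correctly do.
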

The only difference between Corollary 4.1 of \cite{Pollicott_Sharp_2001} and Theorem \ref{thm: Weaker Analytic Extension of Eta} is the tracked dependency of our estimates on $K$.

As before, we deduce effective equidistribution by studying the asymptotics of the function
\begin{equation*}
  \psi_{K,0}(T):= \sum_{e^{hr^{(n)}(x)}\leq T}k^{(n)}(x).
\end{equation*}
We will do this by studying the asymptotics of the auxiliary functions
\begin{equation*}
    \psi_{K,\ell}(T):=\int_1^T\psi_{K,\ell-1}(u)du=\sum_{e^{hr^{(n)}(x)}\leq T}k^{(n)}(x)\left(T-e^{hr^{(n)}(x)} \right)^\ell, \ \ell\in\mathbb{N}.
\end{equation*}
We will use the identity
\begin{equation*}
    \frac{1}{2\pi i}\int_{d-i\infty}^{d+i\infty}\frac{y^{s}}{s(s+1)\cdots(s+\ell)}ds=
     \begin{cases} 
      0 & 0<y\leq 1 \\
      \frac{1}{k!}\left(1-\frac{1}{y}\right)^\ell & y>1, \\

\end{cases}
\end{equation*}
valid for any $d>1$, to relate $\psi_{K,\ell}$ to $\eta_K$:
 \begin{equation}
     \psi_{K,\ell}(T)=\frac{1}{2\pi i}\int_{d-i\infty}^{d+i\infty}\eta_K(s)\frac{T^{s+\ell}}{s(s+1)\cdots(s+\ell)}ds.
 \end{equation}

We will estimate $\psi_{K,\ell}$ by changing the contour of integration so that we may use the bounds in Theorem \ref{thm: Weaker Analytic Extension of Eta}. The first step is to truncate the integral at some finite imaginary part. Fix $(3\rho+2)^{-1}<\varepsilon<\rho^{-1}$, where $\rho$ is as in Theorem \ref{thm: Weaker Analytic Extension of Eta}, and set $R=R(T)=(\log(T))^\varepsilon$. Then we have
\begin{equation*}
    \left|\psi_{K,\ell}(T)-\frac{1}{2\pi i}\int_{d-iR(T)}^{d+iR(T)}\eta_K(s)\frac{T^{s+\ell}}{s(s+1)\cdots(s+\ell)}ds\right|\leq 
    |\eta_{K}(d)|\frac{T^{d+\ell}}{\pi \ell R(T)^\ell}.    
\end{equation*}
We choose $d=1+\frac{1}{\log(T)}$, and first observe that
$T^{d+\ell}=T^{\ell+1}T^{\frac{1}{\log (T)}}=eT^{\ell+1}$. We claim that
$|\eta_K(d)|=O(||K||_{C^0}\log(T))$. Indeed we have
\begin{equation*}
    \begin{split}
        |\eta_K(d)| &\leq \sum_{n=1}^\infty\frac{1}{n}\sum_{\sigma^n(x)=x}|k^{(n)}(x)|e^{-hr^{(n)}(x)}e^{\frac{-h\cdot(\inf r)\cdot n}{\log(T)}}\\
        &\leq
    \left\lVert K\right\rVert _{C^0}\sum_{n=1}^\infty e^{\frac{-h\cdot(\inf r)\cdot n}{\log(T)}}\sum_{\sigma^n(x)=x}e^{-hr^{(n)}(x)} \\
    &\leq C\left\lVert K\right\rVert_{C^0}\sum_{n=1}^\infty e^{\frac{-h\cdot(\inf r)\cdot n}{\log(T)}}\\ 
    &\leq \frac{C\left\lVert K\right\rVert_{C^0}}{1- e^{\frac{-h\cdot(\inf r)}{\log(T)}}},
    \end{split}
\end{equation*}
and it follows from direct computation that
\begin{equation*}
    \frac{1}{1- e^{\frac{-h\cdot(\inf r)}{\log(T)}}}=O(\log(T)).
\end{equation*}
Thus
\begin{equation}\label{eqn: Contour Cutoff Estimate}
    \begin{split}
        \left|\psi_{K,\ell}(T)-\frac{1}{2\pi i}\int_{d-iR(T)}^{d+iR(T)}\eta_K(s)\frac{T^{s+\ell}}{s(s+1)\cdots(s+\ell)}ds\right|
    &= O\left(||K||_{C^0}\frac{T^{\ell+1}\log(T)}{(\log(T))^{\ell\varepsilon}}\right)\\
    &= O\left(||K||_{C^0}\frac{T^{\ell+1}}{(\log(T))^{\ell\varepsilon-1}}\right). 
    \end{split}
\end{equation}

By the residue theorem we have 
\begin{equation*}
    \int_{d-iR(T)}^{d+iR(T)}\eta_K(s)\frac{T^{s+\ell}}{s(s+1)\cdots(s+\ell)}ds=
    \left(\frac{1}{h}\int Kd\mu \right)\cdot\frac{T^{\ell+1}}{(\ell+1)!}+
    \int_\Gamma\eta_K(s)\frac{T^{s+\ell}}{s(s+1)\cdots(s+\ell)}ds,
\end{equation*}
where $\Gamma$ is the contour consisting of the directed line segments 
$[d+iR(T),C(R)+iR(T)]$, $[C(R)+iR(T),C(R)-iR(T)]$, and $[C(R)-iR(T),d-iR(T)]$, where we choose $C(R):=~1-~\frac{1}{h^{\rho+1}R(T)^\rho}$. Notice that by the choice of $C(R)$, the contour $\Gamma$ lies entirely within the region $\mathcal{R}(\rho)$ for sufficiently large $T$.

We first consider the integral over $[d+iR(T),C(R)+iR(T)]$:
\begin{equation}
\label{eqn: Horizontal Contour Estimate}
\begin{split}
    &\left|\int_{d+iR(T)}^{C(R)+iR(T)}\eta_K(s)\frac{T^{s+\ell}}{s(s+1)\cdots(s+\ell)}ds\right|\\
    &\ \leq\int_{C(R)}^d |\eta_K(\sigma+iR(T))|\frac{|T^{\sigma+\ell+iR(T)}|}{|\sigma+iR(T)||\sigma+1+iR(T)|\cdots|\sigma+\ell+iR(T)|}d\sigma \\
    &\ \leq  C_4||K||_{Lip}R(T)^{3(\rho+1)}\int_{C(R)}^d\frac{T^{\sigma+\ell}}{R(T)^{\ell+1}}d\sigma\\
    &\ \leq C_4||K||_{Lip}R(T)^{3(\rho+1)}(d-C(R))\frac{T^{\ell+d}}{R(T)^{\ell+1}}\\
    &\ = O\left(||K||_{Lip}\frac{T^{\ell+1}}{(\log T)^{(\ell+1-3(\rho+1))\varepsilon}}\right),
\end{split}
\end{equation}
where we used the bounds $|\eta_K(\sigma+iR(T))|\leq C_4||K||_{Lip}|R(T)|^{3(\rho+1)}$ and $d-C(R)=O(1)$. We will choose $\ell$ such that 
$\ell+1-3(\rho+1)>0$. The integral over the segment $[C(R)-iR(T),d-iR(T)]$ is bounded in exactly the same manner.

It remains to bound the integral over the segment $[C(R)+iR(T),C(R)-iR(T)]$. We break this up further into the segments $[C(R)+iR(T),C(R)+it_2]$, $[C(R)+it_2,C(R)-it_2]$, and $[C(R)-it_2,C(R)-iR(T)]$. On the first and last of these segments, we can use the bound on $\eta_K$ in $\mathcal{R}(\rho)$, whereas since the size of the middle segment $[C(R)+it_2,C(R)-it_2]$ does not grow with $T$, we immediately can see that the integral over this segment is $O(||K||_{Lip})$. The remaining two segments are handled identically. We have
\begin{align*}
    &\left|\int_{C(R)+iR(T)}^{C(R)+it_2}\eta_K(s)\frac{T^{s+\ell}}{s(s+1)\cdots(s+\ell)}ds\right|\\
    &=\left|\int_{t_2}^{R(T)}\eta_K(C(R)+it)\frac{T^{C(R)+\ell+it}}{(C(R)+it)(C(R)+1+it)\cdots(C(R)+\ell+it)}dt \right|\\
    &\leq C_4||K||_{Lip}T^{C(R)+\ell}\int_{t_2}^{R(T)}t^{3(\rho+1)-(\ell+1)}dt\\
    &\leq C_5||K||_{Lip}T^{C(R)+\ell}R(T)^{3(\rho+1)-\ell}.
\end{align*}
Observe that
\begin{equation*}
    T^{C(R)+\ell}=T^{\ell+1-\frac{1}{h^{\rho+1}R(T)^\rho}}=T^{\ell+1}e^{\frac{-\log T}{h^{\rho+1}(\log T)^{\rho\varepsilon}}}=T^{\ell+1}e^{\frac{-(\log T)^{1-\rho\varepsilon}}{h^{\rho+1}}},
\end{equation*}
 and $e^{\frac{-(\log T)^{1-\rho\varepsilon}}{h^{\rho+1}}}=O((\log T)^{-\gamma})$ for every $\gamma>0$. Thus
 \begin{equation}
 \label{eqn: Vertical Contour Estimate}
     \left|\int_{C(R)+iR(T)}^{C(R)+it_2}\eta_K(s)\frac{T^{s+\ell}}{s(s+1)\cdots(s+\ell)}ds\right|=O\left(||K||_{Lip}\frac{T^{\ell+1}}{(\log T)^\gamma}\right),
 \end{equation}
 for every $\gamma>0$.

Now comparing (\ref{eqn: Contour Cutoff Estimate}), (\ref{eqn: Horizontal Contour Estimate}), and (\ref{eqn: Vertical Contour Estimate}), and noting that by the choice of $\varepsilon$ we have 
$(3(\rho+1)-\ell-1)\varepsilon< \ell\varepsilon-1$, we conclude that
\begin{equation}
\label{eqn: Auxiliary Function Estimate}
\begin{split}
    \psi_{K,\ell}(T)&=\left(\frac{1}{h}\int Kd\mu \right)\cdot\frac{T^{\ell+1}}{(\ell+1)!}+O\left(||K||_{Lip}\frac{T^{\ell+1}}{(\log T)^{(\ell+1-3(\rho+1))\varepsilon}}\right)\\
    &=\left(\frac{1}{h}\int Kd\mu \right)\cdot\frac{T^{\ell+1}}{(\ell+1)!}+O\left(||K||_{Lip}\frac{T^{\ell+1}}{(\log T)^{\beta}}\right),
\end{split}
\end{equation}
where $\beta:=(\ell+1-3(\rho+1))\varepsilon>0$.

We now want to pass from (\ref{eqn: Auxiliary Function Estimate}) to an asymptotic bound on $\psi_{K,0}(T)$. The process is the same as in the preceding section, except we take $\Delta(T):=T(\log T)^{-\beta/2}$, and estimate 
\begin{align*}
    \psi_{K,\ell-1}(T)&\leq\frac{1}{\Delta(T)}\int_T^{T+\Delta}\psi_{K,\ell-1}(u)du=\frac{\psi_{K,\ell}(T+\Delta(T))-\psi_{K,\ell}(T)}{\Delta(T)}\\
    &=\left(\frac{1}{h}\int Kd\mu \right)\cdot\frac{T^{\ell}}{\ell!}+O\left(||K||_{Lip}\frac{T^\ell}{(\log T)^{\beta/2}}\right),
\end{align*}
and the lower bound follows by the same arguments. Repeating this argument yields the bound
\begin{equation}
    \label{eqn: Asympotics for psi in Anosov}
    \psi_{K,0}(T)=\left(\frac{1}{h}\int Kd\mu \right)\cdot T+O\left(||K||_{Lip}\frac{T}{(\log T)^\delta}\right),
\end{equation}
where $\delta=\beta/2^\ell>0$.

We can now deduce Theorem \ref{thm: Effective Equidistribution - general Anosov}:
\begin{proof}[Proof of Theorem \ref{thm: Effective Equidistribution - general Anosov}]
    Assume for now that the coding is perfect and $\int Kd\mu=0$. Then
    \begin{equation*}
        \int Kd\mu_{T}=\frac{1}{Z_T(f_t)}\psi_{K,0}(e^{hT})=O\left(||K||_{Lip}\frac{e^{hT}}{(\log e^{hT})^\delta}\cdot e^{-hT}\right)=O\left(||K||_{Lip}T^{-\delta}\right),
    \end{equation*}
as desired. The process of dealing with over counting in the counting is handled in exactly the same way as in the proof of Theorem \ref{thm: Effective Equidistribution}.

\end{proof}

\printbibliography
\end{document}